\theoremstyle{plain}
\newtheorem{theorem}{Theorem}[section]
\newtheorem{corollary}[theorem]{Corollary}
\newtheorem{lemma}[theorem]{Lemma}
\newtheorem{proposition}[theorem]{Proposition}
\theoremstyle{definition}
\newtheorem{definition}[theorem]{Definition}
\newtheorem{example}[theorem]{Example}
\theoremstyle{remark}
\newtheorem{remark}[theorem]{Remark}
\newcommand{\A}{\mathcal{A}}
\newcommand{\C}{\mathbb{C}}
\newcommand{\R}{\mathbb{R}}
\newcommand{\G}{\mathcal{G}}
\newcommand{\scR}{\mathcal{R}}
\newcommand{\scS}{\mathcal{S}}
\newcommand{\scT}{\mathcal{T}}
\newcommand{\scC}{\mathcal{C}}
\newcommand{\scD}{\mathcal{D}}
\newcommand{\scP}{\mathcal{P}}
\newcommand{\Z}{\mathbb{Z}}
\newcommand{\scH}{{\mathcal{H}}}
\newcommand{\M}{\mathcal{M}}
\newcommand{\lcm}{\operatorname{lcm}}
\newcommand{\Hom}{\operatorname{Hom}}
\newcommand{\quasi}{\operatorname{quasi}}
\renewcommand{\part}{\operatorname{par}}
\newcommand{\arith}{\operatorname{arith}}
\newcommand{\free}{\operatorname{free}}
\newcommand{\tot}{\operatorname{tot}}
\newcommand{\tor}{\operatorname{tor}}
\newcommand{\toric}{\operatorname{toric}}
\newcolumntype{K}[1]{>{\centering\arraybackslash}p{#1}}
\begin{document}

\title[Abelian Lie group arrangements]{Combinatorics of certain abelian Lie group arrangements and chromatic quasi-polynomials}

\date{\today}

\begin{abstract}
The purpose of this paper is twofold. 
Firstly, we generalize the notion of characteristic polynomials of hyperplane and toric arrangements to those of  certain abelian Lie group arrangements. 
Secondly, we give two interpretations for the chromatic quasi-polynomials and their constituents through subspace and toric viewpoints.

 \end{abstract}

\author{Tan Nhat Tran}
\address{Tan Nhat Tran, Department of Mathematics, Hokkaido University, Kita 10, Nishi 8, Kita-Ku, Sapporo 060-0810, Japan.}
\email{trannhattan@math.sci.hokudai.ac.jp}
\author{Masahiko Yoshinaga}
\address{Masahiko Yoshinaga, Department of Mathematics, Hokkaido University, Kita 10, Nishi 8, Kita-Ku, Sapporo 060-0810, Japan.}
\email{yoshinaga@math.sci.hokudai.ac.jp}


\keywords{$G$-Tutte polynomial, chromatic quasi-polynomial, characteristic quasi-polynomial, constituent, abelian Lie group arrangement}

\date{\today}
\maketitle


\section{Introduction}
\label{sec:intro}
When a finite list $\A$ of integer vectors in $\Z^\ell$ is given, we may naturally associate to it a \emph{hyperplane arrangement} $\A(\R)$ in $\R^\ell$ and a \emph{toric arrangement} $\A(G)$ in the torus $G^\ell$ with $G$ is ${\Bbb S}^1$ or $\C^\times$. 
The study of a hyperplane/toric arrangement typically goes along with the study of its \emph{characteristic polynomial} as the polynomial carries combinatorial and topological information of the arrangement (e.g., \cite{OS80}, \cite{OT92}, \cite{Loo93}, \cite{DP05}). 
Many attempts have been made in order to compute and to make a broader understanding the characteristic polynomials (e.g., \cite{A96}, \cite{BS98},  \cite{KTT08},  \cite{ERS09},  \cite{Law11}, \cite{L12}, \cite{DM13}, \cite{BM14}, \cite{Y18W}, \cite{Y18L}). 
One of the first and classical ways is to define a polynomial in more than one variable which specializes to the characteristic polynomial. It is well-known that the characteristic polynomial of any hyperplane arrangement gets generalized to the \emph{Tutte polynomial} \cite{T54}, going back to Whitney (e.g., \cite[Theorem 2.4]{St07}). 
More recently, an arithmetical generalization of the ordinary Tutte polynomial, the \emph{arithmetic} Tutte polynomial was introduced \cite{L12} to extend the analysis on the toric case. 
For another interesting way of generalizing the polynomials, we would encode the information of the characteristic polynomials of these arrangements in a single quasi-polynomial, the \emph{characteristic quasi-polynomial} of $\A$ \cite{KTT08}. 
This quasi-polynomial was defined to evaluate the cardinality of the complement of the \emph{$q$-reduction arrangement} $\A(\Z/q\Z)$ in $(\Z/q\Z)^\ell$. 
Then the characteristic polynomials of the hyperplane and toric arrangements coincide with the first and the last constituents of the characteristic quasi-polynomial, respectively (\cite{KTT08}, \cite{LTY17}). 

In a more general setting, all of the concepts mentioned above can be redefined through a finite list $\A$ of elements in a finitely generated abelian group $\Gamma$. 
The notions of hyperplane, toric and $q$-reduction arrangements are unified by the notion of \emph{$G$-plexifications} $\A(G)$ \cite{LTY17}, which are defined by means of group homomorphisms from $\Gamma$ to certain abelian groups $G$. 
The $G$-plexifications when $G=F\times({\Bbb S}^1)^p\times \R^q,$ with $F$ is a finite abelian group are (non-trivial) examples of arrangements of \emph{abelian Lie groups}. 
In \cite{LTY17}, their topologies provided that $q>0$ have been described by the \emph{$G$-characteristic polynomials}. 
Studying problems related to this specific class of abelian Lie group arrangements is a source of our interest and suffices the purpose of generalizing the previous concepts.
The first result of the paper is obtained through the motivation of giving a combinatorial framework that describes the intersection patterns of $\A(G)$. 
Insprired by the pioneered work in \cite{L12}, we will associate to $\A(G)$ intersection posets and prove that its \emph{partial} and \emph{total} characteristic polynomials are also expressible in terms of the $G$-characteristic polynomials
 (Section \ref{sec:combinatorics}). 

The cardinality of the complement of $\A(\Z/q\Z)$ appeared as an analogue of the chromatic polynomial defined on a graph, and is called the \emph{chromatic quasi-polynomial} \cite{BM14}. 
We are also interested in a question that given a constituent of a chromatic polynomial how we can describe it in connection with arrangement characteristic polynomials?
Less is known, except for the first and the last. 
Some attempts were made to describe certain classes of the constituents appeared in \cite{Tan18}, \cite[\S10.3.3]{DFM17}.
The second result of the paper is two complete interpretations for the constituents through subspace and toric viewpoints. 
The subspace interpretation is obtained from the combinatorics of $\A(\R^{\dim(G)}\times \Z/k\Z)$, while the toric interpretation is obtained from the arithmetics of $\A({\Bbb S}^1)$ (or $\A(\C^\times)$) by appropriately extracting its intersection poset (Section \ref{sec:constituents}). 
 
The remainder of the paper is organized as follows. 
In Section \ref{sec:background}, we recall definitions and basic facts of the generalized toric arrangements, $G$-plexifications with the associated $G$-Tutte polynomials, and chromatic quasi-polynomials. 
In Section \ref{sec:combinatorics}, we define for  $\A(G)$ with $G=F\times({\Bbb S}^1)^p\times \R^q$ the total and partial intersection posets, and express the corresponding characteristic polynomials in terms of $G$-characteristic polynomials (Theorem \ref{thm:Lie}, Corollary \ref{cor:G-cha}). 
In Section \ref{sec:constituents}, we obtain the subspace interpretation (Corollary \ref{cor:Lie-chromatic}) of the chromatic quasi-polynomial immediately from the preceding section. 
Then we obtain reciprocity laws for the constituents with the aid of the reciprocity laws for integral arrangements (formula \eqref{eq:reciprocity}).
The toric interpretation (Theorem \ref{thm:2nd-interpret-quasi}) is derived from an important arithmetical result on the intersection poset (Lemma \ref{lem:k-components}). 
As a consequence, we provide another proof for the Chen-Wang's comparison of the coefficients of the chromatic quasi-polynomial.

\medskip

\section{Preliminaries} 
\label{sec:background}
Let us first fix some definitions and notations throughout the paper. 
Let $\Gamma$ be a finitely generated abelian group, and
let $\A\subseteq\Gamma$ be a finite list (multiset) of elements in $\Gamma$. 
For each sublist $\scS\subseteq \Gamma$, we denote by $r_\scS$ the \emph{rank} (as an abelian group)
of the subgroup $\langle\scS\rangle\le \Gamma$ generated by $\scS$. 
Given a group $K$, denote by $K_{\tor}$ the torsion subgroup of $K$. 
Denote $\scS^{\tor}:= \scS \cap \Gamma_{\tor}$.


\subsection{Generalized toric arrangements} 
\label{subsec:CQP}
Let $(\scP, \le_\scP)$ be a finite poset. 
The M\"{o}bius function $\mu_\scP$ of $\scP$ is the function $\mu_\scP:\scP \times \scP \longrightarrow\Z$ defined by
$$\mu_\scP(a,b):= 
 \begin{cases}
0 \quad\mbox{ if\,  $a\nleq_\scP b$}, \\
1 \quad\mbox{ if\,  $a=_\scP b$}, \\
-\sum_{a \le c <b}\mu_\scP(a,c) \quad\mbox{ if\,  $a<_\scP b$}.
\end{cases}
$$
A poset $\scP$ is said to be \emph{ranked} if for every $a \in \scP$, all maximal chains among those with $a$ as greatest element have the same length, denoted this common number by $\mathrm{rk}_\scP(a)$.

Now we briefly recall what has been known on combinatorics of generalized toric arrangements following \cite[\S5]{L12}.
Set $T:=\Hom(\Gamma, \mathbb{S}^1)$. 
In the following discussion, ${\Bbb S}^1$ and $\C^\times$ are interchangeable. 
Each $\alpha \in \A$ determines the subvariety of $T$ as follows $H_{\alpha}:=\{\varphi \in T\mid \varphi (\alpha)= 1\}$.
The collection $\scT(\A):=\{H_{\alpha}\mid\alpha\in\A\}$ is called the \emph{generalized toric arrangement} defined by $\A$ on $T$. 
In particular, when $\Gamma$ is free, $T$ is a torus and $\scT(\A)$ is called the \emph{toric arrangement}. 
To describe the combinatorics of $\scT(\A)$, we associate to it an \emph{intersection poset} $L_{\scT(\A)}$, which is the set of all the connected components of all the intersections of the subvarieties $H_{\alpha}$. 
The poset $L_{\scT(\A)}$ is ranked by the dimension of its elements (\emph{layers}).
The combinatorics is encoded in the \emph{characteristic polynomial} defined by
$$\chi_{\A}^{\toric}(t):= \sum_{\scC \in L_{\scT(\A)}} \mu(T^\scC, \scC)t^{\mathrm{dim}(\scC)},$$ 
where $T^\scC$ is the connected component of $T$ that contains $\scC$. To compute $\chi_{\A}^{\toric}(t)$ (in the same way as Whitney's theorem showing how the characteristic polynomial of a hyperplane arrangement is computed by the Tutte polynomial), Moci introduced the \emph{arithmetic Tutte polynomial}
$$T_{\A}^{\arith}(x, y):=
\sum_{\scS\subseteq \A}\#(\Gamma/\langle\scS\rangle)_{\tor}(x-1)^{r_\A-r_\scS}(y-1)^{\#\scS-r_\scS}.$$ 

  \begin{theorem}[\cite{L12}]
\label{thm:Moci's}
If $\Gamma$ is free and $0_\Gamma \notin \A$ (or even if $\Gamma$ is arbitrary with $\A^{\tor} = \emptyset$), then 
$$\chi_{\A}^{\toric}(t)=(-1)^{r_\A}\cdot t^{r_\Gamma-r_\A}\cdot T_{\A}^{\arith}(1-t,0).$$
\end{theorem}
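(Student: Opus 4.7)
The plan is to introduce the Whitney-style inclusion–exclusion sum
$$W_\A(t) := \sum_{\scS\subseteq \A} (-1)^{\#\scS}\cdot \#\pi_0(H_\scS)\cdot t^{\dim H_\scS},$$
and establish two separate identities: $W_\A(t) = (-1)^{r_\A}t^{r_\Gamma-r_\A}\,T_\A^{\arith}(1-t,0)$ by direct algebra, and $\chi_\A^{\toric}(t) = W_\A(t)$ via the defining recurrence of the Möbius function on $L_{\scT(\A)}$.

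For the algebraic identity, the short exact sequence $\langle\scS\rangle \hookrightarrow \Gamma \twoheadrightarrow \Gamma/\langle\scS\rangle$ combined with Pontryagin duality identifies
$$H_\scS \cong \Hom(\Gamma/\langle\scS\rangle, \mathbb{S}^1) \cong (\mathbb{S}^1)^{r_\Gamma - r_\scS} \times \widehat{(\Gamma/\langle\scS\rangle)_{\tor}},$$
so $\dim H_\scS = r_\Gamma - r_\scS$ and $\#\pi_0(H_\scS) = \#(\Gamma/\langle\scS\rangle)_{\tor}$. Substituting $x=1-t$, $y=0$ in the definition of $T_\A^{\arith}$ and collapsing the signs $(-1)^{r_\A-r_\scS}\cdot(-1)^{\#\scS-r_\scS} = (-1)^{r_\A+\#\scS}$ gives
$$(-1)^{r_\A}\,t^{r_\Gamma-r_\A}\,T_\A^{\arith}(1-t,0) \;=\; \sum_{\scS\subseteq \A} (-1)^{\#\scS}\cdot \#(\Gamma/\langle\scS\rangle)_{\tor}\cdot t^{r_\Gamma-r_\scS} \;=\; W_\A(t).$$

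The substance of the argument is $\chi_\A^{\toric}(t)=W_\A(t)$. Regrouping $W_\A$ by layers gives
$$W_\A(t) \;=\; \sum_{\scC \in L_{\scT(\A)}} t^{\dim \scC}\cdot g(\scC), \qquad g(\scC) := \sum_{\substack{\scS\subseteq \A \\ \scC \in \pi_0(H_\scS)}} (-1)^{\#\scS},$$
so it suffices to prove $g(\scC) = \mu(T^\scC,\scC)$ for every layer $\scC$. The geometric observation to exploit is that $\scC$ is a connected component of $H_\scS$ exactly when $\scS \subseteq \A_\scC := \{\alpha \in \A : \scC\subseteq H_\alpha\}$ and $r_\scS = r_{\A_\scC}$, because otherwise $\dim H_\scS > \dim \scC$ and $\scC$ sits strictly inside its ambient component. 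For any layer $X$, every $\scS \subseteq \A_X$ then singles out a unique component of $H_\scS$ containing $X$, namely a layer $Y_\scS$ in the interval $[T^X, X]$; partitioning the sublists of $\A_X$ by $Y_\scS$ yields
$$\sum_{Y \in [T^X, X]} g(Y) \;=\; \sum_{\scS \subseteq \A_X} (-1)^{\#\scS} \;=\; [\A_X = \emptyset] \;=\; [X = T^X],$$
which is exactly the defining recurrence for $\mu(T^X,\cdot)$, so by uniqueness $g(Y) = \mu(T^Y,Y)$. The main obstacle lies in this partition step—verifying that the assignment $\scS \mapsto Y_\scS$ is well-defined and surjects onto $[T^X, X]$ with fibres matching the dimension criterion above. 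The hypothesis $\A^{\tor} = \emptyset$ intervenes here to ensure each $H_\alpha$ has pure positive codimension, so that the intersection poset is ranked and $\A_Y \subseteq \A_X$ for $Y \in [T^X, X]$ behaves as a matroid truncation of $\A_X$.
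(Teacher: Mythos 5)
Your proof is correct and takes essentially the same route the paper does: the paper cites Moci for this theorem and later rederives it as the $G=\mathbb{S}^1$, $F$-trivial instance of Corollary~\ref{cor:G-cha}, where Lemma~\ref{lem:key-Lie} is exactly the M\"obius-recurrence identity $g(\scC)=\mu(T^\scC,\scC)$ that you establish via the sublist-partition argument. One small misstatement in your closing remark: the hypothesis $\A^{\tor}=\emptyset$ is not needed to make $L$ ranked or to get $\A_Y\subseteq\A_X$ for $Y$ in the interval (both hold regardless); its only role is in the base case $[\A_X=\emptyset]=[X=T^X]$, since a torsion $\alpha\in\A$ would yield a full-dimensional $H_\alpha$ possibly containing the component $T^X$, so that for such $X$ the telescoping sum equals $0$ rather than $1$ --- which is exactly the second branch of Lemma~\ref{lem:key-Lie}.
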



\subsection{$G$-plexifications}
\label{subsec:G-Tutte} 
Let $G$ be an arbitrary abelian group.
We recall the notions of $G$-plexifications and $G$-Tutte polynomials of $\A$ following \cite[\S3]{LTY17}. 
We regard $T=\Hom(\Gamma, G)$ as our total group. 
For each $\alpha \in \A$, we define the \emph{$G$-hyperplane} associated to $\alpha$ as follows:
\begin{equation*}
H_{\alpha, G}:=\{\varphi \in T \mid \varphi (\alpha)= 0\} \le T. 
\end{equation*}
Then the \emph{$G$-plexification} $\A(G)$ of $\A$ is the collection of the subgroups $H_{\alpha, G}$
$$\A(G):=\{H_{\alpha, G}\mid\alpha\in\A\}.$$
The \emph{$G$-complement} $\M(\A; \Gamma, G)$ of $\A(G)$ is defined by
\begin{equation*}
\M(\A; \Gamma, G):=T\smallsetminus\bigcup_{\alpha\in\A}H_{\alpha, G}. 
\end{equation*}
 
In what follows, we assume further that $G$ is torsion-wise finite i.e., $G[d]:=\{x\in G\mid d\cdot x=0\}$ is finite for all $d\in\Z_{>0}$. 
The \emph{$G$-multiplicity} $m(\scS; G)$ for each $\scS\subseteq \A$ is defined by 
\begin{equation*}
m(\scS; G):=
\#\Hom\left((\Gamma/\langle\scS\rangle)_{\tor}, G\right). 
\end{equation*}

\begin{definition}\quad
\label{def:main}
\begin{enumerate}[(1)]
\item  
The \emph{$G$-Tutte polynomial} $T_{\A}^{G}(x, y)$ of $\A$ is defined by 
\begin{equation*}
T_{\A}^{G}(x, y):=
\sum_{\scS\subseteq \A}m(\scS; G)(x-1)^{r_\A-r_\scS}(y-1)^{\#\scS-r_\scS}. 
\end{equation*}
\item 
The \emph{$G$-characteristic polynomial} $\chi_{\A}^G(t)$ of $\A$ is defined by 
\begin{equation*}
\chi_{\A}^G(t):=(-1)^{r_\A}\cdot t^{r_{\Gamma}-r_\A}\cdot T_{\A}^{G}(1-t, 0). 
\end{equation*}
\end{enumerate}
\end{definition}

\begin{proposition}
\label{prop:gen-quasi-gcd1}
The leading coefficient of $\chi^G_{\A}(t)$ equals $\#\M(\A^{\tor}; \Gamma_{\tor}, G)$. 
\end{proposition}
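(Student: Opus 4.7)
The plan is to unfold the definition of $\chi_\A^G(t)$ to obtain a Whitney-type expansion, identify which terms contribute to the top power of $t$, and then match them against an inclusion--exclusion expansion of $\#\M(\A^{\tor};\Gamma_{\tor},G)$.

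First, I would substitute $y=0$ in the $G$-Tutte polynomial: each summand picks up $(-1)^{\#\scS-r_\scS}$, so after the prefactor $(-1)^{r_\A}t^{r_\Gamma-r_\A}$ and the substitution $x=1-t$, the signs collapse (using $(-1)^{r_\A}(-1)^{r_\A-r_\scS}(-1)^{\#\scS-r_\scS}=(-1)^{\#\scS}$) to give the Whitney-type formula
\begin{equation*}
\chi_\A^G(t)=\sum_{\scS\subseteq\A}(-1)^{\#\scS}\,m(\scS;G)\,t^{r_\Gamma-r_\scS}.
\end{equation*}
Since $r_\scS\ge 0$ with equality exactly when $\scS\subseteq\Gamma_{\tor}$, i.e.\ when $\scS\subseteq\A^{\tor}$, the coefficient of $t^{r_\Gamma}$ is
\begin{equation*}
[t^{r_\Gamma}]\chi_\A^G(t)=\sum_{\scS\subseteq\A^{\tor}}(-1)^{\#\scS}\,m(\scS;G).
\end{equation*}

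Next, I would apply inclusion--exclusion to the complement in the torsion part. Writing $T':=\Hom(\Gamma_{\tor},G)$, which is finite because $G$ is torsion-wise finite and $\Gamma_{\tor}$ is a finite abelian group, we have
\begin{equation*}
\#\M(\A^{\tor};\Gamma_{\tor},G)=\sum_{\scS\subseteq\A^{\tor}}(-1)^{\#\scS}\,\#\!\bigcap_{\alpha\in\scS}H_{\alpha,G},
\end{equation*}
where the intersections are taken inside $T'$. The intersection $\bigcap_{\alpha\in\scS}H_{\alpha,G}$ consists of homomorphisms $\Gamma_{\tor}\to G$ vanishing on $\langle\scS\rangle$, hence is canonically identified with $\Hom(\Gamma_{\tor}/\langle\scS\rangle,G)$.

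The remaining key identity to verify is that, for every $\scS\subseteq\A^{\tor}$,
\begin{equation*}
\Hom(\Gamma_{\tor}/\langle\scS\rangle,G)=\Hom\bigl((\Gamma/\langle\scS\rangle)_{\tor},G\bigr),
\end{equation*}
so that $\#\bigcap_{\alpha\in\scS}H_{\alpha,G}=m(\scS;G)$. This follows from the structure theorem: writing $\Gamma\cong\Gamma_{\tor}\oplus\Z^{r_\Gamma}$ and using $\langle\scS\rangle\subseteq\Gamma_{\tor}$, we get $\Gamma/\langle\scS\rangle\cong(\Gamma_{\tor}/\langle\scS\rangle)\oplus\Z^{r_\Gamma}$, whose torsion subgroup is precisely $\Gamma_{\tor}/\langle\scS\rangle$. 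Matching the two expansions term-by-term then yields the claim.

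The proof is essentially bookkeeping; the only mildly subtle point, and the step I would be most careful with, is the last identification $(\Gamma/\langle\scS\rangle)_{\tor}=\Gamma_{\tor}/\langle\scS\rangle$ for $\scS\subseteq\A^{\tor}$, since without the hypothesis $\scS\subseteq\Gamma_{\tor}$ there is no reason for the quotient by $\langle\scS\rangle$ to preserve the torsion--free splitting, and the $G$-multiplicity and the geometric intersection count would cease to agree.
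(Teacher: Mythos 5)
Your proof is correct. Note that in this paper Proposition~\ref{prop:gen-quasi-gcd1} is stated without proof in the preliminaries (it is recalled from the $G$-Tutte framework of [LTY17]), so there is no in-paper argument to compare against; but the route you take is the standard one. The Whitney-type expansion $\chi_\A^G(t)=\sum_{\scS\subseteq\A}(-1)^{\#\scS}m(\scS;G)\,t^{r_\Gamma-r_\scS}$ is correctly derived, the identification of the degree-$r_\Gamma$ contributions as exactly those $\scS\subseteq\A^{\tor}$ is right (since $r_\scS=0$ iff $\langle\scS\rangle\le\Gamma_{\tor}$ iff $\scS\subseteq\A\cap\Gamma_{\tor}$), the inclusion--exclusion count of $\#\M(\A^{\tor};\Gamma_{\tor},G)$ is valid (and remains valid when $\A^{\tor}$ is a multiset, since inclusion--exclusion is indexed by sublists), and the key identification $\Hom(\Gamma_{\tor}/\langle\scS\rangle,G)\simeq\Hom\bigl((\Gamma/\langle\scS\rangle)_{\tor},G\bigr)$ for $\scS\subseteq\Gamma_{\tor}$ is correctly justified via the splitting $\Gamma\cong\Gamma_{\tor}\oplus\Z^{r_\Gamma}$. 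You also rightly flag the one place the hypothesis $\scS\subseteq\Gamma_{\tor}$ is essential; without it $(\Gamma/\langle\scS\rangle)_{\tor}$ need not be a quotient of $\Gamma_{\tor}$, and the term-by-term match would fail.
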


Various specializations of the $G$-plexifications and $G$-Tutte polynomials have appeared in the literature which we refer the reader to \cite{LTY17} for more details.
In particular, the real hyperplane arrangements and generalized toric arrangements are $G$-plexifications by viewing $G=\R$ and $G=\mathbb{S}^1$ (or $G=\C^\times$), respectively. 


\subsection{Chromatic quasi-polynomials} 
\label{subsec:ChQP} 
For each $\scS\subseteq \A$, by the Structure Theorem, we may write $\Gamma/\langle\scS\rangle\simeq\bigoplus_{i=1}^{n_{\scS}}\Z/d_{\scS, i}\Z\oplus\Z^{r_{\Gamma}-r_{\scS}}$ where $n_{\scS}\geq 0$ and $1<d_{\scS, i}|d_{\scS, i+1}$. 
The \emph{LCM-period} $\rho_{\A}$ of $\A$ is defined by 
\begin{equation*}
\label{eq:LCM-period}
\rho_\A:=\lcm(d_{\scS, n_{\scS}}\mid\scS\subseteq \A). 
\end{equation*} 
It is proved in \cite{BM14} that $\#\M(\A; \Gamma, \Z/q\Z)$ is a quasi-polynomial in $q\in\Z_{>0}$ for which $\rho_{\A}$ is a period. 
The quasi-polynomial is called the \emph{chromatic quasi-polynomial} of $\A$, and denoted by $\chi^{\quasi}_{\A}(q)$.
More precisely, there exist polynomials $f_{\A}^k(t)\in\Z[t]$ ($1 \le k \le \rho_\A$),  called the \emph{$k$-constituents}, such that 
for any positive integer $q$, 
\begin{equation*}
\#\M(\A; \Gamma, \Z/q\Z) =f_{\A}^k(q), 
\end{equation*}
where $q\equiv k\bmod \rho_\A$. 

The chromatic quasi-polynomial is precisely the $\Z/q\Z$-characteristic polynomial in variable $q$, i.e., $\chi^{\quasi}_{\A}(q)=\chi^{\Z/q\Z}_{\A}(q)$ (e.g., \cite[Theorem 5.4]{LTY17}), and also the Chen-Wang's quasi-polynomial \cite{Tan18}. 
In particular, when $\Gamma=\Z^\ell$, the $\Z/q\Z$-plexification is the $q$-reduction arrangement defined on $\A$ with $\chi^{\quasi}_{\A}(q)$ is the characteristic quasi-polynomial in the sense of \cite{KTT08}.
A partial description of the constituents in connection with arrangement theory is known.
  \begin{theorem}[\cite{Tan18}]
\label{thm:1-const}
 $$f_{\A}^1(t)= 
\begin{cases}
0 \quad\mbox{ if $\A^{\tor} \ne \emptyset$}, \\
\chi_{\A(\R)}(t)  \quad\mbox{ if $\A^{\tor} = \emptyset$}.
\end{cases}
$$
\end{theorem}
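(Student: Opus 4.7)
My plan is to exploit the explicit Whitney-style expansion
$$
\chi_\A^G(t) \;=\; \sum_{\scS \subseteq \A} (-1)^{\#\scS}\, m(\scS; G)\, t^{\,r_\Gamma - r_\scS},
$$
obtained by substituting $(x,y) = (1-t, 0)$ into Definition \ref{def:main}(1) and simplifying the signs. Since the excerpt identifies $\chi^{\quasi}_\A(q)$ with $\chi^{\Z/q\Z}_\A(q)$, the task reduces to evaluating the right-hand side at $G = \Z/q\Z$ for $q \equiv 1 \pmod{\rho_\A}$ and comparing with $\chi_\A^\R(t)$. The driving observation is that any $q \equiv 1 \pmod{\rho_\A}$ is coprime to $\rho_\A$, hence to every invariant factor $d_{\scS, i}$ appearing in any $(\Gamma/\langle\scS\rangle)_{\tor}$.

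For the case $\A^{\tor} = \emptyset$, I would compute $m(\scS; \Z/q\Z) = \prod_{i=1}^{n_\scS} \gcd(d_{\scS, i}, q)$, and use the coprimality above to conclude $m(\scS; \Z/q\Z) = 1$ for every $\scS$. The displayed formula then reduces to $\sum_{\scS} (-1)^{\#\scS} q^{r_\Gamma - r_\scS}$. But this is exactly $\chi_\A^\R(q)$, because the finiteness of the torsion quotients together with the torsion-freeness of $\R$ force $m(\scS; \R) = 1$ for all $\scS$. Polynomial equality at infinitely many integers yields $f_\A^1(t) = \chi_\A^\R(t)$, and since $\A^{\tor} = \emptyset$ makes each $H_{\alpha, \R}$ a genuine hyperplane through the origin, the right-hand side is the usual characteristic polynomial of the central arrangement $\A(\R)$ by Whitney's theorem (as recalled in Section \ref{subsec:G-Tutte}).

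For the case $\A^{\tor} \ne \emptyset$, I would argue directly that $\M(\A; \Gamma, \Z/q\Z) = \emptyset$ for every $q \equiv 1 \pmod{\rho_\A}$, which forces $f_\A^1 \equiv 0$. Pick $\alpha \in \A^{\tor}$ of order $d$. Taking $\scS = \emptyset$ in the definition of $\rho_\A$ shows that $d_{\emptyset, n_\emptyset}$ equals the exponent of $\Gamma_{\tor}$ and divides $\rho_\A$; since $d$ divides this exponent, $\gcd(d, q) = 1$. For any $\varphi \in \Hom(\Gamma, \Z/q\Z)$ we then have $d \cdot \varphi(\alpha) = \varphi(d \alpha) = 0$ in $\Z/q\Z$, and multiplication by $d$ is a bijection there, so $\varphi(\alpha) = 0$. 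Hence $H_{\alpha, \Z/q\Z} = T$ for all such $q$, the complement is empty, and $\chi^{\quasi}_\A(q) = 0$ as required.

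The main obstacle is essentially bookkeeping — verifying the divisibility $\operatorname{ord}(\alpha) \mid \rho_\A$ cleanly (which is why one is forced to look at $\scS = \emptyset$ rather than $\scS = \{\alpha\}$, where the invariant factors of $\Gamma_{\tor}/\langle\alpha\rangle$ do not transparently contain $d$) and correctly manipulating the $\gcd$ formulas for $m(\scS; \Z/q\Z)$. No deeper combinatorial input is needed beyond the explicit expansion of the $G$-Tutte polynomial.
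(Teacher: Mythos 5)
Your proof is correct and complete. The paper does not include a proof of this statement --- it is imported as a black box from \cite{Tan18} --- so there is no in-paper argument to compare against; what you have produced is a valid self-contained derivation.

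Your route is the natural one given the machinery the paper sets up. The Whitney-style expansion
$\chi_\A^G(t)=\sum_{\scS\subseteq\A}(-1)^{\#\scS}m(\scS;G)\,t^{\,r_\Gamma-r_\scS}$
follows correctly from Definition \ref{def:main} after cancelling the two factors of $(-1)^{r_\A-r_\scS}$, and the identification $\chi^{\quasi}_\A(q)=\chi^{\Z/q\Z}_\A(q)$ is quoted in \S\ref{subsec:ChQP}, so the reduction to comparing $m(\scS;\Z/q\Z)$ with $m(\scS;\R)$ is sound. Your two key arithmetic facts are both right: $q\equiv 1\pmod{\rho_\A}$ forces $\gcd(q,\rho_\A)=1$ and hence $\gcd(d_{\scS,i},q)=1$ for every invariant factor (since each $d_{\scS,i}$ divides $d_{\scS,n_\scS}\mid\rho_\A$), giving $m(\scS;\Z/q\Z)=1$; and $\Hom(\mbox{finite},\R)=0$ gives $m(\scS;\R)=1$. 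Agreement of two polynomials on the infinite set $\{1,1+\rho_\A,1+2\rho_\A,\dots\}$ then yields $f^1_\A(t)=\chi^\R_\A(t)$, and the latter equals $\chi_{\A(\R)}(t)$ when $\A^{\tor}=\emptyset$ by the standard Whitney expansion for central arrangements (the only unstated lemma you are implicitly using is that this multiset Whitney sum is invariant under removing repeated hyperplanes, which is routine). For the torsion case, your observation that $\operatorname{ord}(\alpha)$ divides the exponent of $\Gamma_{\tor}=d_{\emptyset,n_\emptyset}\mid\rho_\A$, and hence is coprime to $q$, is exactly the right way to see $H_{\alpha,\Z/q\Z}=T$ and therefore $f^1_\A\equiv 0$; your remark about why $\scS=\emptyset$ is the correct subset to invoke (rather than $\scS=\{\alpha\}$) is a genuine and well-spotted point. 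No gaps.
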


  \begin{theorem}[\cite{LTY17}]
\label{thm:last-const}
Assume that $\Gamma$ is free and $0_\Gamma \notin \A$. Then
$$f^{\rho_\A}_{\A}(t)=\chi_{\A}^{\toric}(t).$$
\end{theorem}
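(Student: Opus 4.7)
The plan is to compare $f_{\A}^{\rho_\A}(t)$ and $\chi_{\A}^{\toric}(t)$ directly through their respective Tutte-type expansions, using the identification $\chi_{\A}^{\quasi}(q)=\chi_{\A}^{\Z/q\Z}(q)$ already recorded in the excerpt. Unfolding Definition \ref{def:main} and collecting signs, one checks that
$$\chi_{\A}^{\Z/q\Z}(t)=\sum_{\scS\subseteq \A}(-1)^{\#\scS}\,m(\scS;\Z/q\Z)\,t^{r_\Gamma-r_\scS}.$$
So the proof reduces to showing that, when $q$ ranges over positive multiples of $\rho_\A$, the coefficient sequence $m(\scS;\Z/q\Z)$ stabilizes to the arithmetic multiplicity $\#(\Gamma/\langle\scS\rangle)_{\tor}$, and then matching this against Moci's formula (Theorem \ref{thm:Moci's}).

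The stabilization step is the heart of the argument. By the Structure Theorem, $(\Gamma/\langle\scS\rangle)_{\tor}\simeq\bigoplus_{i=1}^{n_\scS}\Z/d_{\scS,i}\Z$ with $d_{\scS,i}\mid d_{\scS,n_\scS}\mid\rho_\A$. If $\rho_\A\mid q$, then $d_{\scS,i}\mid q$ for every $\scS$ and every $i$, and the standard identity $\Hom(\Z/d\Z,\Z/q\Z)\simeq\Z/d\Z$ (whenever $d\mid q$) gives
$$m(\scS;\Z/q\Z)=\prod_{i=1}^{n_\scS}d_{\scS,i}=\#(\Gamma/\langle\scS\rangle)_{\tor}.$$
Consequently $T_{\A}^{\Z/q\Z}(x,y)=T_{\A}^{\arith}(x,y)$ as polynomials in $x,y$ for every positive multiple $q$ of $\rho_\A$, and hence $\chi_{\A}^{\Z/q\Z}(t)=(-1)^{r_\A}t^{r_\Gamma-r_\A}T_{\A}^{\arith}(1-t,0)$.

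To finish, under the hypotheses $\Gamma$ free and $0_\Gamma\notin\A$, Theorem \ref{thm:Moci's} identifies the right-hand side with $\chi_{\A}^{\toric}(t)$. Thus $\chi_{\A}^{\Z/q\Z}(t)=\chi_{\A}^{\toric}(t)$ as polynomials in $t$ for every positive multiple $q$ of $\rho_\A$. Evaluating at $t=q$ and using that $q\equiv\rho_\A\pmod{\rho_\A}$ picks out the last constituent, we obtain $f_{\A}^{\rho_\A}(q)=\chi_{\A}^{\quasi}(q)=\chi_{\A}^{\Z/q\Z}(q)=\chi_{\A}^{\toric}(q)$ on the infinite set of positive multiples of $\rho_\A$. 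Two polynomials agreeing on an infinite set are equal, giving $f_{\A}^{\rho_\A}(t)=\chi_{\A}^{\toric}(t)$.

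There is no serious obstacle: the hypothesis that $\Gamma$ is free enters only to license Moci's theorem (and to ensure $T_{\A}^{\arith}$ matches the stabilized $G$-Tutte polynomial), while $0_\Gamma\notin\A$ is used in Theorem \ref{thm:Moci's} to avoid the trivial subgroup. The one place to be careful is the bookkeeping of signs and exponents when converting between the Tutte polynomial and the characteristic polynomial, and the observation that $\rho_\A$ is defined precisely as the least integer making all torsion orders $d_{\scS,n_\scS}$ divide it, which is exactly what the stabilization argument demands.
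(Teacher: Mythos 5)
The paper quotes this as a known theorem from \cite{LTY17} and gives no proof of it here, so there is no in-text argument to compare against. Your proposal is correct and is essentially the natural argument: the sign/exponent bookkeeping that yields $\chi_{\A}^{\Z/q\Z}(t)=\sum_{\scS}(-1)^{\#\scS}m(\scS;\Z/q\Z)t^{r_\Gamma-r_\scS}$ checks out, the stabilization $m(\scS;\Z/q\Z)=\#(\Gamma/\langle\scS\rangle)_{\tor}$ whenever $\rho_\A\mid q$ is exactly right (all $d_{\scS,i}$ divide $\rho_\A$ by construction, and $\Hom(\Z/d\Z,\Z/q\Z)\simeq\Z/d\Z$ when $d\mid q$), and the finish via Theorem~\ref{thm:Moci's} plus polynomial interpolation on the infinite set $\rho_\A\Z_{>0}$ is clean. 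One small inaccuracy in your closing remark: the stabilization $T_{\A}^{\Z/q\Z}=T_{\A}^{\arith}$ for $\rho_\A\mid q$ holds for arbitrary finitely generated $\Gamma$ and does not itself use freeness; the hypotheses ``$\Gamma$ free and $0_\Gamma\notin\A$'' are invoked solely to apply Theorem~\ref{thm:Moci's}. This does not affect the validity of the proof.
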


\section{The combinatorics}
\label{sec:combinatorics}
Unless otherwise stated, throughout this section, we assume that $G=(\mathbb{S}^1)^p\times \R^q\times F$ with $g:=\dim(G)=p+q\ge0$ and $F$ is a finite abelian group.  
For each $\scS \subseteq \A$, by \cite[Proposition 3.6]{LTY17}, we have
 \begin{equation}
 \label{eq:intersections}
\begin{aligned}
H_{\scS, G}
&:=\bigcap_{\alpha\in\scS}H_{\alpha, G} \\
&\simeq \Hom((\Gamma/\langle\scS\rangle)_{\tor}, G)\times F^{r_\Gamma-r_{\scS}}\times\left((\mathbb{S}^1)^p\times\R^q\right)^{r_{\Gamma}-r_{\scS}}.
\end{aligned} 
 \end{equation}
We agree that $T:=H_{\emptyset, G}$.  
Each connected component of $H_{\scS, G}$ is isomorphic to $\left((\mathbb{S}^1)^p\times\R^q\right)^{r_{\Gamma}-r_{\scS}}$. 
If either $r_{\Gamma}=0$ or $g=0$, it can be identified with a point. 
The set of the connected components of $H_{\scS, G}$ is denoted by  $\mathrm{cc}(H_{\scS, G})$.
The following lemma is somewhat more general than \cite[Lemma 5.4]{L12}.

\begin{lemma}
\label{lem:components}
$\#\mathrm{cc}(H_{\scS, G}) = m(\scS; G) \cdot (\#F)^{r_\Gamma-r_{\scS}}$.
\end{lemma}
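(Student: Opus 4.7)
The plan is to read off the number of connected components directly from the product decomposition~\eqref{eq:intersections}. Since the number of connected components is multiplicative across Cartesian products, I would analyze each of the three factors on the right-hand side of~\eqref{eq:intersections} separately and then multiply.

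First, I would observe that $(\Gamma/\langle\scS\rangle)_{\tor}$ is a finite abelian group, because $\Gamma$ (and hence any quotient) is finitely generated. Since we are assuming $G$ is torsion-wise finite (indeed here $G = (\mathbb{S}^1)^p \times \R^q \times F$ is so), the set $\Hom((\Gamma/\langle\scS\rangle)_{\tor}, G)$ is finite: any homomorphism from a finite torsion group must land in the torsion subgroup $G_{\tor} = F \times ((\mathbb{S}^1)^p)_{\tor}$, whose elements killed by the exponent of $(\Gamma/\langle\scS\rangle)_{\tor}$ form a finite set. As a finite set (with the discrete topology inherited as a subspace of $\Hom(-,G)$), its number of connected components is simply its cardinality, which by definition equals $m(\scS; G)$. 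Second, $F^{r_\Gamma - r_\scS}$ is a finite discrete group, contributing $(\#F)^{r_\Gamma - r_\scS}$ connected components. Third, $\bigl((\mathbb{S}^1)^p \times \R^q\bigr)^{r_\Gamma - r_\scS}$ is a product of connected spaces, hence connected, contributing a single connected component (this is also consistent with the description given just before the lemma statement: each connected component of $H_{\scS,G}$ has precisely this form).

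Multiplying the three contributions yields
\[
\#\mathrm{cc}(H_{\scS, G}) = m(\scS;G) \cdot (\#F)^{r_\Gamma - r_\scS} \cdot 1,
\]
which is the claimed identity.

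I do not anticipate a real obstacle here: the whole statement is essentially a component count for the already-given product decomposition. The only subtle point is justifying that $\Hom((\Gamma/\langle\scS\rangle)_{\tor}, G)$ is finite (and thus discrete), which uses both the finiteness of the torsion quotient and the torsion-wise finiteness of $G$. This is where one must be careful not to conflate $\Hom$ with continuous $\Hom$, but since the domain is finite all group homomorphisms are automatically continuous, so no distinction arises.
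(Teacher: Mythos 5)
Your proof is correct and takes exactly the route the paper implies: the paper states Lemma~\ref{lem:components} without proof, but it is clearly intended to be read off from the product decomposition~\eqref{eq:intersections} together with the remark immediately preceding it that each connected component is a copy of $\bigl((\mathbb{S}^1)^p\times\R^q\bigr)^{r_\Gamma-r_\scS}$. Your treatment of the one nontrivial point — that $\Hom\bigl((\Gamma/\langle\scS\rangle)_{\tor}, G\bigr)$ is finite (hence discrete), by combining finiteness of the torsion quotient with torsion-wise finiteness of $G$ — is exactly right and is the content of Definition~\ref{def:main} making sense in the first place.
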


Most of the main concepts in this section are defined by inspiration of \cite[\S5]{L12} and \cite[\S7]{LTY17}.

\begin{definition}\quad
\label{def:total}
\begin{enumerate}[(1)]
\item  
The \emph{total} intersection poset of $\A(G)$ is defined by
\begin{equation*}
L=L^{\tot}_{\A(G)}:=\{\mbox{connected components of nonempty $H_{\scS, G}$} \mid \scS \subseteq \A\},
\end{equation*}
whose elements, called \emph{layers}, are ordered by reverse inclusion ($\scD \le_L \scC$ if $\scD \supseteq \scC$). 
\item 
The \emph{total} characteristic polynomial of $\A(G)$ is defined by 
\begin{equation*}
\chi_{\A(G)}^{\tot}(t):= \sum_{\scC \in L}\mu(T^\scC, \scC)t^{\mathrm{dim}(\scC)}.
\end{equation*}
Here $T^\scC$ is the connected component of $T$ that contains $\scC$, and $\mu:=\mu_L$. 
\end{enumerate}
\end{definition}

The set of minimal elements of $L$ is exactly $\mathrm{cc}(T)$. 
The connected components of $H_{\A, G}$ are maximal elements of $L$ but the converse is not necessarily true.
For each $\scC\in L$, set
$$\scR(\scC):=\{ \scS \subseteq \A  \mid \scC \in \mathrm{cc}(H_{\scS, G})\}.$$
One observes that $\mathrm{dim}(\scC)=\mathrm{dim}(H_{\scS, G})=g(r_\Gamma-r_\scS)$ for every $\scS \in \scR(\scC)$. 
The \emph{localization} of $\A$ with respect to $\scC$ is defined by 
$$\A_{\scC}:=\{\alpha \in \A \mid \scC  \subseteq H_{\alpha, G}\}.$$ 
Stated differently, $\A_{\scC}$ is the unique maximal element  of $\scR(\scC)$ in the sense that $\scS \subseteq \A_{\scC}$ for every $\scS \in \scR(\scC)$. 
We also can write
 \begin{equation}
 \label{eq:differently}
 \scR(\scC)=\{ \scS \subseteq \A_\scC  \mid r_{\scS} = r_{\A_\scC}\}.
 \end{equation}
Thus $L$ is a ranked poset with a rank function is given by $\mathrm{rk}_L(\scC):= r_{\A_\scC}=\mathrm{codim}(\scC)/g$ for $\scC\in L$. 
 
We are interested in a particular subset of $\mathrm{cc}(T)$,
\begin{align*}
\mathrm{scc}(T)
&:=\{T_i \in \mathrm{cc}(T)\mid (\A_{T_i})^{\tor} = \emptyset\} \\
&=\mathrm{cc}(T)\smallsetminus\bigcup_{\alpha\in\A^{\tor}}\mathrm{cc}(H_{\alpha, G}).
\end{align*}
By using the Inclusion-Exclusion principle,
 \begin{equation}
 \label{eq:card-scc}
\#\mathrm{scc}(T)=\#\M(\A^{\tor}; \Gamma_{\tor},G)  \cdot (\#F)^{r_\Gamma}.
 \end{equation}

 \begin{definition}\quad
\label{def:partial}
\begin{enumerate}[(1)]
\item  
The \emph{partial} intersection poset of $\A(G)$ is defined by
\begin{equation*}
L^{\part}:=\{\scC \in L\mid T^\scC \in   \mathrm{scc}(T)\},
\end{equation*}
with the M\"{o}bius function of $L^{\part}$ is the restriction of $\mu$ i.e., $\mu_{L^{\part}}=\left.\mu\right|_{L^{\part} \times L^{\part}}$.
\item 
The \emph{partial} characteristic polynomial of $\A(G)$ is defined by 
\begin{equation*}
\chi_{\A(G)}^{\part}(t):= \sum_{\scC \in L^{\part}} \mu(T^\scC, \scC)t^{\mathrm{dim}(\scC)}.
\end{equation*}

\end{enumerate}
\end{definition}

In other words, $L^{\part}$ is the dual order ideal (e.g., \cite[\S3.1]{St86}) of $L$ generated by $ \mathrm{scc}(T)$. 
It follows from the definition above that $\chi_{\A(G)}^{\part}(t)=0$ if $\mathrm{scc}(T) =\emptyset$.
\begin{remark}
\label{rem:Lie-par}
Removing from $\A(G)$ the hyperplanes $H_{\alpha, G}$ with $\alpha \in \A^{\tor}$ does not affect the structure of the poset  i.e., 
$$L_{\A(G)}=L_{(\A\smallsetminus \A^{\tor})(G)}= L_{(\A\smallsetminus \A^{\tor})(G)}^{\part}.$$
As a consequence, 
$$\chi^{\tot}_{\A(G)}(t)=\chi^{\tot}_{(\A\smallsetminus \A^{\tor})(G)}(t)= \chi_{(\A\smallsetminus \A^{\tor})(G)}^{\part}(t).$$
In particular, $\chi_{\A(G)}^{\tot}(t)=\chi_{\A(G)}^{\part}(t)$ if $\A^{\tor} = \emptyset$.
\end{remark}

In the lemma below, we generalize the result in \cite[Lemma 5.5]{L12} as we include the possibility $\A^{\tor} \ne \emptyset$. 
\begin{lemma}
\label{lem:key-Lie}
If $\scC\in L$, then  
 \begin{equation*}
 \label{eq:key-Lie}
 \sum_{\scS \in \scR(\scC)}(-1)^{\#\scS}=
 \begin{cases}
 \mu(T^\scC, \scC)\quad\mbox{ if\,  $\scC \in L^{\part}$}, \\
0 \quad\mbox{ if\,  $\scC \notin L^{\part}$}.
\end{cases}
 \end{equation*}
\end{lemma}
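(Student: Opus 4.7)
The plan is to extract both cases from a single Möbius inversion on $L$. Write $F(\scD):=\sum_{\scS\in\scR(\scD)}(-1)^{\#\scS}$ for $\scD\in L$.

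The starting point is a \emph{partition identity}. Each $\scS\subseteq\A_{\scC}$ determines the unique connected component $\scC_{\scS}$ of $H_{\scS,G}$ containing $\scC$, yielding a map $2^{\A_{\scC}}\to[T^{\scC},\scC]$. Combining \eqref{eq:differently} with the dimension count $\dim H_{\scS,G}=g(r_{\Gamma}-r_{\scS})$, together with the observation that $\scC\subseteq\scD$ forces $\A_{\scD}\subseteq\A_{\scC}$, the fiber over $\scD$ is exactly $\scR(\scD)$. Moreover $[T^{\scC},\scC]$ coincides with the principal lower set $\{\scD\in L\mid\scD\le_{L}\scC\}$ since any such $\scD$ is connected and contains $\scC$, hence lies in $T^{\scC}$. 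Summing $(-1)^{\#\scS}$ across the partition yields
\begin{equation*}
\sum_{\scD\le_{L}\scC}F(\scD)=\sum_{\scS\subseteq\A_{\scC}}(-1)^{\#\scS},
\end{equation*}
which equals $1$ when $\A_{\scC}=\emptyset$ and $0$ otherwise.

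Möbius inversion in $L$ then gives $F(\scC)=\sum_{\scD\le_{L}\scC,\,\A_{\scD}=\emptyset}\mu(\scD,\scC)$, so it remains to identify $\{\scD\in L\mid\A_{\scD}=\emptyset\}$ with $\mathrm{scc}(T)$. One direction is immediate: $\A_{\scD}=\emptyset$ forces $\scD$ to be realized by $\scS=\emptyset$, so $\scD\in\mathrm{cc}(T)$ and $\A^{\tor}_{\scD}=\emptyset$ trivially. Conversely, for $\scD\in\mathrm{cc}(T)$ one has $\dim\scD=g\cdot r_{\Gamma}$, whereas any non-torsion $\alpha\in\A_{\scD}$ would force $\dim H_{\alpha,G}=g(r_{\Gamma}-1)<\dim\scD$, contradicting $\scD\subseteq H_{\alpha,G}$; hence on $\mathrm{cc}(T)$ we have $\A_{\scD}=\A^{\tor}_{\scD}$, which vanishes exactly for $\scD\in\mathrm{scc}(T)$. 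Because distinct components of $T$ are disjoint, $T^{\scC}$ is the unique element of $\mathrm{cc}(T)$ with $\scC\subseteq T^{\scC}$, so the Möbius sum collapses to $\mu(T^{\scC},\scC)$ when $T^{\scC}\in\mathrm{scc}(T)$ (i.e.\ $\scC\in L^{\part}$) and to $0$ otherwise.

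The main obstacle I expect is the geometric identification $\A_{\scD}=\emptyset\Leftrightarrow\scD\in\mathrm{scc}(T)$, which captures the asymmetric role of torsion vs.\ non-torsion elements of $\A$ and depends on the strict dimension inequality for non-torsion $\alpha$; this is where the assumption $g\ge 1$ enters cleanly. In the degenerate case $g=0$ the dimension argument is unavailable, and I would instead dispatch $\scC\notin L^{\part}$ directly via the sign-reversing involution $\scS\mapsto\scS\triangle\{\alpha_{0}\}$ on $\scR(\scC)$ for any $\alpha_{0}\in\A^{\tor}_{\scC}$ (well-defined by \eqref{eq:differently}, since toggling a torsion element preserves $r_{\scS}$), and handle $\scC\in L^{\part}$ by a small ad hoc check using the antichain structure of $L$.
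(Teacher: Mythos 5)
Your argument is correct for $g>0$, but it takes a genuinely different (and more unified) route than the paper's. The paper splits into two cases: for $\scC\in L^{\part}$ it performs a Moci-style induction on $\mathrm{rk}_L(\scC)$ (citing \cite[Lemma 5.5]{L12}), and for $\scC\notin L^{\part}$ it reduces to the vanishing of a coefficient of $f^1_{\A_\scC}(t)$, invoking Theorem \ref{thm:1-const} (a result cited from \cite{Tan18}). Your proof instead runs the partition identity $\sum_{\scD\le_L\scC}F(\scD)=\sum_{\scS\subseteq\A_\scC}(-1)^{\#\scS}=[\A_\scC=\emptyset]$ through M\"obius inversion on $L$ once and for all, and then pins down $\{\scD\in L\mid\A_\scD=\emptyset\}=\mathrm{scc}(T)$ by the dimension count $\dim H_{\alpha,G}=g(r_\Gamma-1)<g\,r_\Gamma$ for non-torsion $\alpha$. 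This buys two things: both branches of the lemma fall out of a single inversion (the induction in Moci's lemma is exactly this inversion, unrolled), and you avoid citing Theorem \ref{thm:1-const} altogether, making the lemma self-contained. The paper's route is terser by delegation but less transparent about why the sum vanishes off $L^{\part}$. One caution about your closing remark on $g=0$: the lemma is actually false there (e.g.\ $\Gamma=\Z$, $\A=\{1\}$, $F=\Z/2\Z$ gives $\scC=\{0\}\in L^{\part}$ with $\scR(\scC)=\{\emptyset,\{1\}\}$, so the sum is $0$ while $\mu(T^\scC,\scC)=1$), and indeed \eqref{eq:differently} itself breaks for $g=0$; the paper tacitly restricts to $g>0$ here, as Theorem \ref{thm:Lie} does explicitly, so the ``small ad hoc check'' you defer to would reveal a counterexample rather than close the gap.
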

 \begin{proof} 
The proof of the first line is processed by induction on $\mathrm{rk}_L(\scC)$, which runs essentially the same as that of \cite[Lemma 5.5]{L12}. 
Note that $\A_{\scD} \subseteq \A_\scC$ whenever $\scC\subseteq\scD\subseteq T^{\scC}$. 
If $(\A_{T^\scC})^{\tor} \ne \emptyset$ then $(\A_{\scC})^{\tor}  \ne \emptyset$.
The remaining part of the formula follows from Theorem \ref{thm:1-const}. 
Indeed, by \eqref{eq:differently}
$$ \sum_{\scS \in \scR(\scC)}(-1)^{\#\scS}= \sum_{\substack{\scS\subseteq \A_\scC\\  r_{\scS} = r_{\A_\scC}}}(-1)^{\#\scS}$$
equals the coefficient of $t^{r_\Gamma-r_{\A_\scC}}$ in $f^1_{\A_\scC}(t)$, which is $0$.
   \end{proof}
   
    \begin{corollary} 
 \label{cor:alternate-sign} 
The M\"{o}bius function of $L$ strictly alternates in sign. That is, for all $\scC \in L$,
 $$(-1)^{\mathrm{rk}_L(\scC)} \mu(T^\scC, \scC)>0.$$
  \end{corollary}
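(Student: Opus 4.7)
The plan is to reduce to the situation in which Lemma \ref{lem:key-Lie} applies directly, then realize $\mu(T^\scC,\scC)$ as a coefficient of the characteristic polynomial of a central real hyperplane arrangement, where strict sign alternation is classical (Rota's theorem on geometric lattices).

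First, by Remark \ref{rem:Lie-par}, the poset $L$ and its M\"obius function coincide with those of $L_{\A'(G)}$, where $\A':=\A\smallsetminus\A^{\tor}$. Since $(\A')^{\tor}=\emptyset$, we have $L^{\part}_{\A'(G)}=L_{\A'(G)}$, so in particular $\scC\in L^{\part}_{\A'(G)}$. Applying the first line of Lemma \ref{lem:key-Lie} to the arrangement $\A'(G)$ at $\scC$, together with \eqref{eq:differently} (with $\A$ replaced by $\A'$), yields
$$
\mu(T^\scC,\scC)=\sum_{\scS\in\scR_{\A'}(\scC)}(-1)^{\#\scS}=\sum_{\substack{\scS\subseteq\A'_\scC\\ r_\scS=r_{\A'_\scC}}}(-1)^{\#\scS},
$$
where $\A'_\scC=\A_\scC\smallsetminus\Gamma_{\tor}$ satisfies $r_{\A'_\scC}=r_{\A_\scC}=\mathrm{rk}_L(\scC)$, because torsion elements contribute nothing to the rank.

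Exactly as in the proof of Lemma \ref{lem:key-Lie}, this sum is the coefficient of $t^{r_\Gamma-r_{\A_\scC}}$ in the polynomial $f^1_{\A'_\scC}(t)$. Since $(\A'_\scC)^{\tor}=\emptyset$, Theorem \ref{thm:1-const} identifies $f^1_{\A'_\scC}(t)$ with the characteristic polynomial $\chi_{\A'_\scC(\R)}(t)$ of the central real hyperplane arrangement $\A'_\scC(\R)\subseteq\R^{r_\Gamma}$, which has rank $r_{\A_\scC}$.

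Rota's classical theorem guarantees that the intersection lattice of a central real hyperplane arrangement is geometric and that its M\"obius function strictly alternates in sign; equivalently, the coefficient of $t^{r_\Gamma-i}$ in $\chi_{\A'_\scC(\R)}(t)$ is nonzero and of sign $(-1)^i$ for every $0\le i\le r_{\A_\scC}$. Taking $i=r_{\A_\scC}$ gives $(-1)^{\mathrm{rk}_L(\scC)}\mu(T^\scC,\scC)>0$, as claimed. The step requiring the most care is the reduction from $\A$ to $\A'$ via Remark \ref{rem:Lie-par}, since without it $\scC$ need not lie in $L^{\part}$ and Lemma \ref{lem:key-Lie} only gives vanishing of the alternating sum; once this reduction is in hand, the proof is a short chain combining Lemma \ref{lem:key-Lie}, Theorem \ref{thm:1-const}, and the classical sign alternation for geometric lattices.
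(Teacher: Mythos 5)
Your proof is correct and follows essentially the same route as the paper: realize $\mu(T^\scC,\scC)$ as a coefficient of $f^1$ via (the proof of) Lemma \ref{lem:key-Lie}, identify that constituent with the characteristic polynomial of a real hyperplane arrangement by Theorem \ref{thm:1-const}, and invoke the classical strict sign alternation for such characteristic polynomials. The only (cosmetic) difference is that you perform the reduction to $\A'=\A\smallsetminus\A^{\tor}$ uniformly at the outset, while the paper treats $\scC\in L^{\part}$ directly and dispatches $\scC\notin L^{\part}$ by Remark \ref{rem:Lie-par} at the end.
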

  \begin{proof}
Consider $\scC \in L^{\part}$. 
Note that $f^1_{\A_\scC}(t)=\chi_{(\A_\scC)(\R)}(t)=\sum_{j=r_\Gamma-r_{\A_\scC}}^{r_\Gamma}b_jt^j$ with $(-1)^{r_\Gamma-j}b_j>0$ for all $j$ (e.g., \cite[Corollary 3.5]{St07}).
By Proof of Lemma \ref{eq:key-Lie}, $\mu(T^\scC, \scC)$ is equal to the coefficient of $t^{r_\Gamma-r_{\A_\scC}}$ in $f^1_{\A_\scC}(t)$, which strictly alternates in sign i.e., $(-1)^{r_{\A_\scC}} \mu(T^\scC, \scC)>0$.
If $\scC \notin L^{\part}$, we consider $\A\smallsetminus \A^{\tor}$ instead of $\A$ as argued in Remark \ref{rem:Lie-par}.
\end{proof}

The main idea of the proof below is very similar to the one used in \cite[Theorem 5.6]{L12}. 
We include it with a detailed proof for the sake of completeness.  
   \begin{theorem}
\label{thm:Lie}
 Let $G=(\mathbb{S}^1)^p\times \R^q\times F$ with $g=p+q\in \Z_{>0}$.  
Then
 $$\chi_{\A(G)}^{\part}(t)= \chi_{\A}^G\left( \#F\cdot t^g\right).$$
\end{theorem}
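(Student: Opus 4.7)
The plan is to unfold both sides to a common double sum indexed by sublists $\scS\subseteq\A$ and connected components of $H_{\scS,G}$, and then invoke Lemmas \ref{lem:components} and \ref{lem:key-Lie} to reorganize that sum into the partial characteristic polynomial.

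First I would rewrite $\chi_{\A}^G(t)$ in a more convenient form by expanding the definition of $T_{\A}^G(1-t,0)$. After cancelling the signs $(-1)^{r_\A}(-1)^{r_\A-r_\scS}(-1)^{\#\scS-r_\scS}$, one obtains the clean identity
\[
\chi_{\A}^G(t)\;=\;\sum_{\scS\subseteq\A} (-1)^{\#\scS}\, m(\scS;G)\, t^{r_\Gamma-r_\scS}.
\]
Substituting $t\mapsto \#F\cdot t^g$ then yields
\[
\chi_{\A}^G\!\bigl(\#F\cdot t^g\bigr)\;=\;\sum_{\scS\subseteq\A}(-1)^{\#\scS}\, m(\scS;G)(\#F)^{r_\Gamma-r_\scS}\, t^{g(r_\Gamma-r_\scS)}.
\]

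Next I would apply Lemma \ref{lem:components} to identify the factor $m(\scS;G)(\#F)^{r_\Gamma-r_\scS}$ with $\#\mathrm{cc}(H_{\scS,G})$, so the displayed expression becomes
\[
\sum_{\scS\subseteq\A}\sum_{\scC\in\mathrm{cc}(H_{\scS,G})}(-1)^{\#\scS}\, t^{g(r_\Gamma-r_\scS)}.
\]
Since $\dim(\scC)=g(r_\Gamma-r_\scS)$ for every $\scC\in\mathrm{cc}(H_{\scS,G})$, and since the pairs $(\scS,\scC)$ with $\scC\in\mathrm{cc}(H_{\scS,G})$ are precisely the pairs with $\scC\in L$ and $\scS\in\scR(\scC)$, switching the order of summation rewrites the expression as
\[
\sum_{\scC\in L} t^{\dim(\scC)}\!\sum_{\scS\in\scR(\scC)}(-1)^{\#\scS}.
\]

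Finally, I would apply Lemma \ref{lem:key-Lie}: the inner alternating sum equals $\mu(T^\scC,\scC)$ when $\scC\in L^{\part}$ and vanishes otherwise, so only layers in $L^{\part}$ survive. This gives exactly $\chi_{\A(G)}^{\part}(t)$ by Definition \ref{def:partial}. The argument is essentially a double-counting/bookkeeping calculation; there is no real obstacle once the two supporting lemmas are in place, and the only thing to watch carefully is the sign cancellation in step one and the fact that each layer $\scC\in L$ arises from many $\scS\in\scR(\scC)$, which is exactly what Lemma \ref{lem:key-Lie} is designed to handle.
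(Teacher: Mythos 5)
Your proof is correct and follows essentially the same route as the paper's: it is the same double-counting argument using Lemma \ref{lem:components} and Lemma \ref{lem:key-Lie}, just run from the right-hand side to the left rather than degree-by-degree as the paper does.
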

 \begin{proof}
We must prove that
 $$  \sum_{\scC \in L^{\part}} \mu(T^\scC, \scC)t^{\mathrm{dim}(\scC)}=\sum_{\scS\subseteq \A}(-1)^{\#\scS}
m(\scS; G) \cdot (\#F)^{r_\Gamma-r_{\scS}}\cdot t^{g(r_{\Gamma}-r_\scS)}. $$
It is equivalent to proving that for all $k=r_{\Gamma}-r_\A, \ldots, r_{\Gamma}$,
 $$  \sum_{\substack{\scC \in L^{\part}  \\  gk=\mathrm{dim}(\scC)}}  \mu(T^\scC, \scC) =\sum_{\substack{\scS\subseteq \A\\  k=r_{\Gamma}-r_\scS}} (-1)^{\#\scS}
m(\scS; G)\cdot (\#F)^{r_\Gamma-r_{\scS}}. $$
We have
\begin{align*}
 \sum_{\substack{\scC \in L^{\part} \\  gk=\mathrm{dim}(\scC)}}  \mu(T^\scC, \scC) 
& =  \sum_{\substack{\scC \in L\\  gk=\mathrm{dim}(\scC)}} \sum_{\scS \in \scR(\scC)}(-1)^{\#\scS}  \\
&= \sum_{\substack{\scS\subseteq \A\\  r_\scS=r_{\Gamma}-k}}\left( \sum_{\scC\in  \mathrm{cc}(H_{\scS, G})}1\right) (-1)^{\#\scS} \\
&=\sum_{\substack{\scS\subseteq \A\\  k=r_{\Gamma}-r_\scS}} (-1)^{\#\scS}
m(\scS; G)\cdot (\#F)^{r_\Gamma-r_{\scS}}.
\end{align*}
We have applied Lemma \ref{lem:key-Lie} in the first equality, switched roles of sums in the second equality, and used Lemma \ref{lem:components} in the last equality.
\end{proof}

 \begin{corollary} 
 \label{cor:G-cha} 
 Let $G=(\mathbb{S}^1)^p\times \R^q\times F$ with $g=p+q\in \Z_{>0}$. Then 
$$
\chi_{\A(G)}^{\tot}(t)
=
\chi_{\A\smallsetminus \A^{\tor}}^G\left( \#F\cdot t^g\right).
$$
  \end{corollary}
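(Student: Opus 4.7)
The plan is to deduce the corollary directly by chaining Remark \ref{rem:Lie-par} with Theorem \ref{thm:Lie} applied to the sublist $\A\smallsetminus\A^{\tor}$. The point is that the total characteristic polynomial is insensitive to torsion elements of $\A$, whereas the partial characteristic polynomial becomes equal to the total one precisely when $\A^{\tor}=\emptyset$.

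First, I would invoke Remark \ref{rem:Lie-par}, which gives the identification
$$\chi^{\tot}_{\A(G)}(t)=\chi_{(\A\smallsetminus\A^{\tor})(G)}^{\part}(t).$$
This handles the passage from the total version of the characteristic polynomial of $\A(G)$ to the partial version for the torsion-free sublist. Note that the underlying reason, already recorded in the remark, is that for $\alpha\in\A^{\tor}$ the corresponding subgroup $H_{\alpha,G}$ contains every layer of $L_{\A(G)}$ (since $\langle\alpha\rangle$ lies in the torsion subgroup and hence $\varphi(\alpha)=0$ is automatic on components of intersections); thus deleting such $\alpha$'s neither changes the poset $L$ nor its M\"obius function, while simultaneously it makes the distinction between $\mathrm{cc}(T)$ and $\mathrm{scc}(T)$ collapse.

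Next, I would apply Theorem \ref{thm:Lie} to the list $\A\smallsetminus\A^{\tor}$, which by construction satisfies $(\A\smallsetminus\A^{\tor})^{\tor}=\emptyset$, to obtain
$$\chi_{(\A\smallsetminus\A^{\tor})(G)}^{\part}(t)=\chi_{\A\smallsetminus\A^{\tor}}^{G}\bigl(\#F\cdot t^{g}\bigr).$$
Combining the two displayed equalities yields the desired identity. No additional computation or case analysis is needed.

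The only potential subtlety, and hence the step that deserves a brief justification rather than a mere citation, is ensuring that Remark \ref{rem:Lie-par} is actually applicable under the standing hypothesis $g=p+q\in\Z_{>0}$; but since the remark is stated for the same class of groups $G=(\mathbb{S}^1)^p\times\R^q\times F$ as Theorem \ref{thm:Lie}, there is no obstacle. Thus the proof is essentially a two-line chain of equalities, and I would present it as such.
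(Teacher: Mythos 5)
Your proposal is correct and takes exactly the route the paper does: the paper's proof of this corollary consists of the single sentence ``It follows from Theorem \ref{thm:Lie} and Remark \ref{rem:Lie-par}.'' One small inaccuracy in your parenthetical gloss of the remark: it is \emph{not} true that $H_{\alpha,G}$ for $\alpha\in\A^{\tor}$ contains every layer of $L_{\A(G)}$ (e.g.\ $\Gamma=\Z/2\Z\times\Z$, $\alpha=(1,0)$, $G=\mathbb{S}^1$ already gives $H_{\alpha,G}\subsetneq T$ a proper union of components). What is true, and what makes the remark work, is that for torsion $\alpha$ the value $\varphi(\alpha)$ is locally constant in $\varphi$, so $H_{\alpha,G}$ is a union of connected components of $T$; consequently intersecting with $H_{\alpha,G}$ only selects among existing components of $H_{\scS,G}$ rather than cutting out new ones, which is why $L_{\A(G)}=L_{(\A\smallsetminus\A^{\tor})(G)}$ as posets. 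Your conclusion and the two-line chain of equalities are nonetheless correct as stated.
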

  \begin{proof}
It follows from Theorem \ref{thm:Lie}  and Remark \ref{rem:Lie-par}.
\end{proof}
\begin{remark}
\label{rem:g=0}
Although either Theorem \ref{thm:Lie} or Corollary \ref{cor:G-cha} may not be valid when $g=0$, there is no loss of information in these formulations. 
Namely, $\chi_{\A(G)}^{\part}(t)=\#\mathrm{scc}(T)$, and by equality \eqref{eq:card-scc} and Proposition \ref{prop:gen-quasi-gcd1}, this equals the ``leading part" of $\chi_{\A}^G\left( \#F\right)$ (the value of the leading term of $\chi_{\A}^G\left( t\right)$ evaluated at $\#F$). 
Similarly, $\chi_{\A(G)}^{\tot}(t)=\#\mathrm{cc}(T)$, which is equal to the leading part of $\chi_{\A\smallsetminus \A^{\tor}}^G\left( \#F\right)$.
\end{remark}

\begin{remark}
\label{rem:recover-Moci's}
Note that when $G=\mathbb{S}^1$ (or $G=\C^\times$ if the dimension is defined over $\C$) and $\A^{\tor}=\emptyset$, $\chi_{\A(G)}^{\part}(t)=\chi_{\A(G)}^{\tot}(t)=\chi_{\A}^{\toric}(t)$. 
The result of Moci (Theorem \ref{thm:Moci's}) is a special case of Corollary \ref{cor:G-cha}.
\end{remark}


  \section{The constituents}
\label{sec:constituents}

\subsection{Via subspace viewpoint} 
\label{subsec:via-subspace}

Our first result in this section is the interpretation for chromatic polynomials and their constituents through the real subspace arrangement viewpoint. 
Combining Theorem \ref{thm:Lie} with the property of the chromatic polynomials (e.g., \cite[Proposition 3.6]{Tan18}), we obtain

 \begin{corollary} 
 \label{cor:Lie-chromatic} 
 Let $G = \R^g\times \Z/q\Z$ with $g >0$, $q>0$. Then 
$$\chi_{\A(G)}^{\part}\left( q\right)=\chi^{\quasi}_{\A}(q^{g+1}).$$ 
  \end{corollary}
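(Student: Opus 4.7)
The plan is to apply Theorem~\ref{thm:Lie} directly to the group $G=\R^{g}\times\Z/q\Z$, then to match the resulting $G$-characteristic polynomial evaluation with a value of $\chi_{\A}^{\quasi}$ via the chromatic identity cited from \cite[Proposition~3.6]{Tan18}.

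First, I would translate the corollary into the notation of Theorem~\ref{thm:Lie} by taking $p=0$, the $\R^{q}$-factor of the theorem played by $\R^{g}$, and $F=\Z/q\Z$ (so $\#F=q$ and the theorem's dimension parameter $g$ matches ours). Since $\A^{\tor}\subseteq \A$ is unrestricted, the partial version of the theorem applies directly and yields the polynomial identity
$$\chi_{\A(G)}^{\part}(t)=\chi_{\A}^{G}\bigl(q\,t^{g}\bigr).$$
Substituting $t=q$ collapses the right-hand side to a single evaluation $\chi_{\A}^{G}(q^{g+1})$.

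Next, I would verify that the $\R^{g}$ factor is inert in the $G$-multiplicities: since $(\Gamma/\langle\scS\rangle)_{\tor}$ is a finite torsion group and $\R^{g}$ is torsion-free, $\Hom((\Gamma/\langle\scS\rangle)_{\tor},\R^{g})=0$, so $m(\scS;G)=m(\scS;\Z/q\Z)$ for every $\scS\subseteq\A$. Consequently $\chi_{\A}^{G}(t)=\chi_{\A}^{\Z/q\Z}(t)$ as polynomials, and the previous display reads
$$\chi_{\A(G)}^{\part}(q)=\chi_{\A}^{\Z/q\Z}\bigl(q^{g+1}\bigr).$$

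Finally, I would invoke the cited chromatic property \cite[Proposition~3.6]{Tan18} to identify the value $\chi_{\A}^{\Z/q\Z}(q^{g+1})$ with $\chi_{\A}^{\quasi}(q^{g+1})$, which closes the argument. I expect the main obstacle to lie in this last step rather than in the application of Theorem~\ref{thm:Lie}: by the defining identity displayed in the excerpt, $\chi_{\A}^{\quasi}(q^{g+1})=\chi_{\A}^{\Z/q^{g+1}\Z}(q^{g+1})$, which \emph{a priori} uses the possibly different multiplicities $m(\scS;\Z/q^{g+1}\Z)$. The required equality between the two polynomial evaluations at the common point $q^{g+1}$ is a non-tautological compatibility between different cyclic quotients at the level of evaluations, and it is precisely this arithmetic input that the cited proposition is designed to supply.
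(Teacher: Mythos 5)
Your reconstruction follows the paper's proof verbatim: apply Theorem~\ref{thm:Lie} with $p=0$, $F=\Z/q\Z$, note that $m(\scS;\R^g\times\Z/q\Z)=m(\scS;\Z/q\Z)$ since the finite group $(\Gamma/\langle\scS\rangle)_{\tor}$ has no nontrivial homomorphisms to $\R^g$, and then appeal to the cited property of chromatic polynomials to convert $\chi^{\Z/q\Z}_{\A}(q^{g+1})$ into $\chi^{\quasi}_{\A}(q^{g+1})$. So your proposal is a faithful unpacking of the paper's one-line proof, and you have correctly located the delicate step.

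However, your hesitation at that final step is more than well placed: the identity $\chi^{\Z/q\Z}_{\A}(q^{g+1})=\chi^{\quasi}_{\A}(q^{g+1})$ is not a formal consequence of $\chi^{\quasi}_{\A}(m)=\chi^{\Z/m\Z}_{\A}(m)$, and in fact it fails in general. Writing things out, the left side uses $m(\scS;\Z/q\Z)=\prod_i\gcd(d_{\scS,i},q)$ while the right side uses $m(\scS;\Z/q^{g+1}\Z)=\prod_i\gcd(d_{\scS,i},q^{g+1})$, and these agree only when $\gcd(q,\rho_{\A})=\gcd(q^{g+1},\rho_{\A})$. A concrete counterexample: take $\Gamma=\Z^2$, $\A=\{(0,4)\}$, so $\rho_{\A}=4$ and $\chi^{\quasi}_{\A}(m)=m^2-\gcd(4,m)\,m$. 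With $g=1$, $q=2$, Theorem~\ref{thm:Lie} gives $\chi^{\part}_{\A(G)}(t)=\chi^{\Z/2\Z}_{\A}(2t)=(2t)^2-2(2t)=4t^2-4t$, hence $\chi^{\part}_{\A(G)}(2)=8$, whereas $\chi^{\quasi}_{\A}(2^{2})=\chi^{\quasi}_{\A}(4)=16-16=0$. So neither your proof nor the paper's one-liner actually closes this gap; either the corollary needs an extra hypothesis (for instance $\gcd(q,\rho_{\A})=\gcd(q^{g+1},\rho_{\A})$, automatic when $\rho_{\A}$ is squarefree), or the right-hand side should be read as the constituent evaluation $f^{\gcd(q,\rho_{\A})}_{\A}(q^{g+1})$, which is what Theorem~\ref{thm:Lie} together with Corollary~\ref{cor:Lie-constituents} genuinely delivers.
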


 \begin{corollary} 
 \label{cor:Lie-constituents} 
 Let $G = \R^g\times \Z/k\Z$ with $g > 0$ and $1 \le k \le\rho_{\A}$. 
 Then 
$$\chi_{\A(G)}^{\part}\left( t \right)=f^k_{\A}(k\cdot t^g).$$ 
  \end{corollary}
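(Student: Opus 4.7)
My plan is to realize this corollary as a direct consequence of Theorem~\ref{thm:Lie}, once one identifies the $\Z/k\Z$-characteristic polynomial of $\A$ with the $k$-constituent $f^k_\A(t)$ as polynomials. First, I specialize Theorem~\ref{thm:Lie} by taking $p=0$, $q=g$, and $F=\Z/k\Z$, so that $G=\R^g\times\Z/k\Z$ as required and $\#F=k$. Since $g>0$, the hypothesis $g=p+q\in\Z_{>0}$ of Theorem~\ref{thm:Lie} holds, and the theorem yields at once
$$\chi_{\A(G)}^{\part}(t)=\chi_\A^G\bigl(k\cdot t^g\bigr).$$
It therefore suffices to prove $\chi_\A^G(s)=f^k_\A(s)$ as polynomials in~$s$.

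The first reduction is to drop the $\R^g$ factor. Since $\R^g$ is torsion-free, $\Hom(H,\R^g)=0$ for every torsion abelian group $H$, so for each $\scS\subseteq \A$,
$$m(\scS;G)=\#\Hom\bigl((\Gamma/\langle\scS\rangle)_{\tor},\,\R^g\oplus\Z/k\Z\bigr)=\#\Hom\bigl((\Gamma/\langle\scS\rangle)_{\tor},\,\Z/k\Z\bigr)=m(\scS;\Z/k\Z).$$
Substituting into Definition~\ref{def:main} gives $T_\A^G(x,y)=T_\A^{\Z/k\Z}(x,y)$ term-by-term, and hence $\chi_\A^G(s)=\chi_\A^{\Z/k\Z}(s)$ as polynomials.

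The remaining and main task is to show the polynomial identity $\chi_\A^{\Z/k\Z}(s)=f^k_\A(s)$ for $1\le k\le \rho_\A$. I expect this to be the principal obstacle because the two sides are defined in quite different manners: one is extracted from a $G$-Tutte polynomial, the other by interpolating a quasi-polynomial. To bridge them I would invoke the LCM-period decomposition $(\Gamma/\langle\scS\rangle)_{\tor}\simeq\bigoplus_{i=1}^{n_\scS}\Z/d_{\scS,i}\Z$ from Section~\ref{subsec:ChQP}, noting that $d_{\scS,i}\mid d_{\scS,n_\scS}\mid \rho_\A$. For any positive integer $q$ with $q\equiv k\pmod{\rho_\A}$ and each $\scS$, the congruence $q\equiv k$ modulo $d_{\scS,i}$ yields
$$m(\scS;\Z/q\Z)=\prod_{i=1}^{n_\scS}\gcd(d_{\scS,i},q)=\prod_{i=1}^{n_\scS}\gcd(d_{\scS,i},k)=m(\scS;\Z/k\Z),$$
so $\chi_\A^{\Z/q\Z}(s)=\chi_\A^{\Z/k\Z}(s)$ as polynomials in $s$. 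Combining with the identity $\chi^{\quasi}_\A(q)=\chi_\A^{\Z/q\Z}(q)$ recalled in Section~\ref{subsec:ChQP}, one obtains $f^k_\A(q)=\chi_\A^{\Z/k\Z}(q)$ for the infinitely many positive integers $q\equiv k\pmod{\rho_\A}$, forcing the polynomial identity $f^k_\A(s)=\chi_\A^{\Z/k\Z}(s)$ and completing the chain
$$\chi_{\A(G)}^{\part}(t)=\chi_\A^G(k\cdot t^g)=\chi_\A^{\Z/k\Z}(k\cdot t^g)=f^k_\A(k\cdot t^g).$$
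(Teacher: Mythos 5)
Your proof is correct and follows essentially the same route as the paper: specialize Theorem~\ref{thm:Lie} to $G=\R^g\times\Z/k\Z$ (so $\#F\cdot t^g=k\cdot t^g$) and then identify $\chi_{\A}^{\Z/k\Z}(s)$ with the constituent $f^k_{\A}(s)$. The only difference is that the paper outsources that identification to a cited result (\cite[Proposition 3.6]{Tan18}, invoked just before Corollary~\ref{cor:Lie-chromatic}), whereas you re-derive it from scratch via the $\gcd$-periodicity of $m(\scS;\Z/q\Z)$ and the equality $\chi^{\quasi}_{\A}(q)=\chi^{\Z/q\Z}_{\A}(q)$ on the arithmetic progression $q\equiv k\pmod{\rho_\A}$, which is a sound and self-contained way to obtain the same fact.
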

  
Let us explain Corollary \ref{cor:Lie-constituents} in more detail. 
For nontriviality, we assume that $\mathrm{scc}(T) \ne \emptyset$ (e.g., $\#\A^{\tor}\le 1<\#\A$), and $r_\Gamma>0$. 
Each connected component of $T=\Hom(\Gamma, \R^g\times \Z/k\Z)$ is isomorphic to $\R^{gr_\Gamma}$. 
For each $T_i \in \mathrm{scc}(T)$, the poset $L_i=\{\scC \in L\mid \scC \subseteq T_i \}$ is isomorphic to the total (or equivalently, partial) intersection poset of a $\R^g$-plexification $\G_i$ in $\R^{gr_\Gamma}$ (or \emph{$g$-plexification} in the sense of \cite[\S5.2]{Bj94})), with each $\G_i$ is possibly empty and defined over the integers. 
Thus after a rescaling of variable, each constituent records the summation of the total characteristic polynomials of the $\G_i$'s i.e.,
 \begin{equation}
 \label{eq:rescale-sum}
f^k_{\A}(kt^q) =\sum_{T_i \in \mathrm{scc}(T)}  \chi^{\tot}_{\G_i}(t).
 \end{equation}

\begin{remark}
\label{rem:familiar}
In particular, when $g=1$, each $\G_i$ becomes an integral hyperplane arrangement $\scH_i$ \cite[Proposition 4.5]{Tan18}. 
The conclusion related to the first constituent ($k=1$) in Corollary \ref{cor:Lie-constituents} is the same as that stated in Theorem \ref{thm:1-const}. 
In particular, if $\Gamma = \Z^{\ell}$, each hyperplane $H_{\alpha,\R\times \Z/k\Z}$ in $T$ can be identified with $H_{\alpha,\R} \times H_{\alpha,\Z/k\Z}$ in $\R^\ell \times (\Z/k\Z)^\ell$. 
Each arrangement $\scH_i$ turns out to be a subarrangement of $\A(\R)$, and in which components of $T$ that the components of $H_{\alpha,\R\times \Z/k\Z}$ locate depends on the arithmetics of the list $\A$.
Therefore the Hasse diagram of each $L_i$ is a subgraph of that of $L_{\A(\R)}$. 
\end{remark}
\begin{example}
\label{ex:caculation-special}
Let $\Gamma=\Z^2$, $\A=\{\alpha, \beta, \gamma\}\subsetneq\Z^2$ with $\alpha=(-1, {1})$, $\beta=(0, {2})$, and $\gamma=(0, 4)$.
Then 
$$
\chi^{\quasi}_{\A}(q) = 
\begin{cases}
q^2-2q+1  \quad\mbox{ if $\gcd(q,4)=1$}, \\
q^2-3q+2 \quad\mbox{ if $\gcd(q,4)=2$}, \\
q^2-5q+4 \quad\mbox{ if $\gcd(q,4)=4$}.
\end{cases}
$$
Set $G_k:=\R\times \Z/k\Z$ with $k\in \{1,2,4\}$. 
The Hasse diagrams of $L_{\A(G_k)}$ are drawn in Figures \ref{Fig1a}, \ref{Fig1b}, \ref{Fig1c}. 
The total characteristic polynomials $\chi_{\A(G_k)}^{\tot}(t)$ are computed according to the ``$\times n$", indicator of the number of isomorphic Hasse diagrams of $L_i$'s.
 \begin{figure}[h] 
\begin{subfigure}{.5\textwidth}
\[
 \xymatrixrowsep{.5cm}
\xymatrixcolsep{.1cm}
 \xymatrix{
& \bullet &\\
\bullet \ar@{-}[ur]  \ar@{-}[dr] && \bullet \ar@{-}[ul]  \ar@{-}[dl] \\
&\bullet&
}
\]  
\caption*{$\times 1$}  
\end{subfigure}
\caption{$\chi_{\A(G_1)}^{\tot}(t)=t^2-2t+1=f^1_{\A}(t).$}
\label{Fig1a}
\end{figure}

 \begin{figure}[h]     
\begin{subfigure}{.3\textwidth}
\[
 \xymatrixrowsep{.5cm}
\xymatrixcolsep{.1cm}
 \xymatrix{
& \bullet &\\
\bullet \ar@{-}[ur]  \ar@{-}[dr] && \bullet \ar@{-}[ul]  \ar@{-}[dl] \\
&\bullet&
}
\]  
\caption*{$\times2$}  
\end{subfigure}%
\begin{subfigure}{.3\textwidth}
\[
 \xymatrixrowsep{.5cm}
 \xymatrix{
\bullet \ar@{-}[d] \\
\bullet
}
\]  
  \caption*{$\times2$}
\end{subfigure}
\caption{$\chi_{\A(G_2)}^{\tot}(t)=4t^2-6t+2=f^2_{\A}(2t).$}
\label{Fig1b}
\end{figure}
 
  \begin{figure}[h] 
\begin{subfigure}{.3\textwidth}
\[
 \xymatrixrowsep{.5cm}
\xymatrixcolsep{.1cm}
 \xymatrix{
& \bullet &\\
\bullet \ar@{-}[ur]  \ar@{-}[dr] && \bullet \ar@{-}[ul]  \ar@{-}[dl] \\
&\bullet&
}
\]  
\caption*{$\times4$}  
\end{subfigure}%
\begin{subfigure}{.3\textwidth}
\[
 \xymatrixrowsep{.5cm}
 \xymatrix{
\bullet \ar@{-}[d] \\
\bullet
}
\]  
  \caption*{$\times12$}
\end{subfigure}
\caption{$\chi_{\A(G_4)}^{\tot}(t)=16t^2-20t+4=f^4_{\A}(4t).$}
\label{Fig1c}
\end{figure}
 \end{example}
 
Now we give a discussion on \emph{reciprocity laws} for $\chi^{\quasi}_{\A}(q)$. 
After the works of \cite{CW12} and \cite{Tan18}, we know that $(-1)^{r_\Gamma}\chi^{\quasi}_{\A}(-q) \ge 0$ for all $q \in \Z_{< 0}$. 
Also, this fact can be derived from formula \eqref{eq:quasi-minus} in this paper.
It is natural to ask whether the evaluations $(-1)^{r_\Gamma}\chi^{\quasi}_{\A}(-q)$ have any combinatorial/enumerative meaning.
A partial answer is probably well-known when $\Gamma = \Z^{\ell}$ that $(-1)^{\ell}\chi^{\quasi}_{\A}(-q)$ can be expressed in terms of the Ehrhart quasi-polynomial of an ``inside-out" polytope \cite{BZ06}.
The construction of the polytope and the hyperplanes cutting through it can be found in \cite[\S2.2]{KTT08} (with some modification). 

Owning to equality \eqref{eq:rescale-sum} and Remark \ref{rem:familiar}, we can give an answer to the aforementioned question. 
For nontriviallity, we assume that  $r_\Gamma>0$.
The reciprocity laws for the characteristic polynomial of an integral arrangement have been formulated by several methods \cite{A10}, \cite[\S7]{BS18}, \cite[\S4]{W15}. 
Thus the reciprocity laws for any (nonzero) constituent $f^k_{\A}(t)$ can be obtained from the reciprocity laws of the polynomials $\chi_{\scH_i}(t)$ as follows:
 \begin{equation}
 \label{eq:reciprocity}
(-1)^{r_\Gamma}f^k_{\A}(-t) =\sum_{i} (-1)^{r_\Gamma}\chi_{\scH_i}\left( \frac{-t}k\right).
 \end{equation}

\subsection{Via toric viewpoint} 
\label{subsec:via-toric}
We may expect that if there exists a ``nicer" expression to describe every constituent without making any rescaling of variable. 
It turns out that such expression can be obtained from the toric arrangement by appropriately extracting its poset of layers.
Now let us turn to the second interpretation via toric arrangement viewpoint. 
In the remainder of this section, we assume that $G$ is either $\mathbb{S}^1$ or $\C^\times$. 
We retain the notation of the total group $T =\Hom(\Gamma,  G)$, with the identity is denoted by $\textbf{1}$. 
For each $k \in \Z$, consider the homomorphism
$$E_k:T \longrightarrow T\quad \mbox{via} \quad \varphi \mapsto \varphi^k :=\varphi \cdots\varphi .$$

\begin{definition}\quad
\label{def:k-total}
\begin{enumerate}[(1)]
\item  
For each $k \in \Z$,  the \emph{$k$-total} intersection poset of $\A(G)$ is defined by
  \begin{equation*}
 \label{eq:subposet-equiv}
 L[k]= \{\scC \in L \mid \textbf{1} \in E_k(\scC)\}.
 \end{equation*}
\item 
The \emph{$k$-total} characteristic polynomial of $\A( G)$ is defined by 
$$\chi_{\A( G)}^{\textrm{\textit{k}-tot}}(t):= \sum_{\scC \in L[k]} \mu(T^\scC, \scC)t^{\mathrm{dim}(\scC)}.$$ 
\end{enumerate}
\end{definition}

The cover relation in $L$ is preserved in $L[k]$ i.e., if $\scC$ covers $\scD$ in $L$ and $\scC\in L[k]$ then $\scD\in L[k]$, which implies that $L[k]$ is an \emph{order ideal} (e.g., \cite[\S3.1]{St86}). 
For each $\scS \subseteq\A$, note that $H_{\scS,  G}$ is a subtorus of $T$ whose each connected component is isomorphic to the torus $G^{r_\Gamma-r_\scS}$. 
Let $\scC_\scS^\textbf{1} \in \mathrm{cc}(H_{\scS,  G})$ be the \emph{identity component} of $H_{\scS,  G}$, that is, the connected component that contains $\textbf{1}$. 
Thus $\mathrm{cc}(H_{\scS,  G})$ can be identified with the quotient group $H_{\scS,  G}/\scC_\scS^\textbf{1}$. 
In the lemma below, we generalize \cite[Lemma 5.4]{L12} in an arithmetical manner.
\begin{lemma}
\label{lem:k-components}
Fix $k\in\Z_{>0}$. For each $\scS \subseteq\A$, we have
$$\# \left( \mathrm{cc}(H_{\scS,  G})\cap L[k] \right) = m(\scS; \Z/k\Z).$$
\end{lemma}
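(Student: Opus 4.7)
The plan is to label the connected components of $H_{\scS,G}$ by a finite abelian group, and then characterize those lying in $L[k]$ as its $k$-torsion subgroup. By formula \eqref{eq:intersections} specialized to trivial finite factor (i.e., $F=\{0\}$), we have
$$H_{\scS,G}\simeq \Hom((\Gamma/\langle\scS\rangle)_{\tor}, G)\times G^{r_\Gamma-r_\scS},$$
in which the identity component $\scC_\scS^\textbf{1}$ corresponds to the second factor, since $\Hom((\Gamma/\langle\scS\rangle)_{\tor}, G)$ is finite (hence discrete). Thus the quotient map yields an isomorphism $H_{\scS,G}/\scC_\scS^\textbf{1}\simeq\Hom((\Gamma/\langle\scS\rangle)_{\tor}, G)$, identifying $\mathrm{cc}(H_{\scS,G})$ with a finite abelian group of cardinality $m(\scS;G)$.

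The key step is to recast the condition $\textbf{1}\in E_k(\scC)$ in this language. Both $\mathbb{S}^1$ and $\C^\times$ are divisible, hence so is the torus $\scC_\scS^\textbf{1}\simeq G^{r_\Gamma-r_\scS}$, which means $E_k$ maps $\scC_\scS^\textbf{1}$ onto itself. Writing $\scC=\varphi_0\cdot\scC_\scS^\textbf{1}$ for any choice $\varphi_0\in\scC$, this surjectivity gives $E_k(\scC)=\varphi_0^k\cdot\scC_\scS^\textbf{1}$, so $\textbf{1}\in E_k(\scC)$ iff $\varphi_0^k\in\scC_\scS^\textbf{1}$, i.e., iff the image of $\scC$ in the quotient $H_{\scS,G}/\scC_\scS^\textbf{1}$ is annihilated by $k$.

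It remains to count $k$-torsion in $\Hom((\Gamma/\langle\scS\rangle)_{\tor}, G)$. Its $k$-torsion subgroup is precisely $\Hom((\Gamma/\langle\scS\rangle)_{\tor}, G[k])$, and $G[k]$ is cyclic of order $k$ for both $G=\mathbb{S}^1$ and $G=\C^\times$, whence
$$\#\Hom((\Gamma/\langle\scS\rangle)_{\tor}, G[k])=\#\Hom((\Gamma/\langle\scS\rangle)_{\tor}, \Z/k\Z)=m(\scS;\Z/k\Z),$$
as desired. The only substantive point is the divisibility reduction in the second paragraph, which converts the set-theoretic image condition $\textbf{1}\in E_k(\scC)$ into the clean torsion condition on a component class; the rest is bookkeeping via the Structure Theorem together with the tautological identity $\Hom(A, B)[k]=\Hom(A, B[k])$.
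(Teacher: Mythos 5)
Your proof is correct and takes essentially the same approach as the paper: identify $\mathrm{cc}(H_{\scS,G})$ with the finite group $\Hom\bigl((\Gamma/\langle\scS\rangle)_{\tor},G\bigr)$, show that membership in $L[k]$ corresponds to being $k$-torsion in that group, and count the $k$-torsion as $\#\Hom\bigl((\Gamma/\langle\scS\rangle)_{\tor},G[k]\bigr)=m(\scS;\Z/k\Z)$. Your second paragraph, using divisibility of $G^{r_\Gamma-r_\scS}$ to pass from the set-theoretic condition $\textbf{1}\in E_k(\scC)$ to the torsion condition on the component class, makes explicit a step that the paper asserts without comment in its equation identifying $\ker(\overline{E_k})$ with $\mathrm{cc}(H_{\scS,G})\cap L[k]$.
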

 \begin{proof}
For each $\scS \subseteq\A$, the homomorphism $E_k$ induces the endomorphism $\overline{E_k}$ of $H_{\scS,  G}/\scC_\scS^\textbf{1}$ with
 \begin{equation}
 \label{eq:bar}
 \ker(\overline{E_k})=\mathrm{cc}(H_{\scS,  G})\cap L[k].
 \end{equation}
Using the identification $H_{\scS,  G} = \Hom(\Gamma/\langle\scS\rangle, G)$ and a decomposition $\Gamma/\langle\scS\rangle = (\Gamma/\langle\scS\rangle)_{\tor}\oplus (\Gamma/\langle\scS\rangle)_{\free}$, we can write 
$$ 
\scC_\scS^\textbf{1} =  \{\varphi \in  H_{\scS,  G} \mid \varphi(x)=1, \,\forall x \in \left(\Gamma/\langle \scS\rangle\right)_{\tor} \}.
$$ 
Applying the exact functor $\Hom(\textendash, G)$ to the following exact sequence 
$$
0 \longrightarrow \left(\dfrac{\Gamma}{\langle \scS\rangle}\right)_{\tor} \longrightarrow \dfrac{\Gamma}{\langle \scS \rangle} \longrightarrow 
\  \dfrac{\Gamma}{\langle \scS \rangle}/\left(\dfrac{\Gamma}{\langle \scS\rangle}\right)_{\tor} 
\longrightarrow 0,
$$ 
we obtain
 \begin{equation}
 \label{eq:gr-iso}
H_{\scS,  G}/\scC_\scS^\textbf{1} \simeq \Hom((\Gamma/\langle\scS\rangle)_{\tor}, G).
 \end{equation}
Furthermore, $E_k$ induces  the endomorphism $\widetilde{E_k}$ of $\Hom((\Gamma/\langle\scS\rangle)_{\tor}, G)$ with
 \begin{equation}
 \label{eq:tilde}
 \ker(\widetilde{E_k})=\Hom((\Gamma/\langle\scS\rangle)_{\tor},  G[k] ).
 \end{equation}
Here $ G[k] = \{x \in  G \mid x^k=1\} \simeq \Z/k\Z$. 
Combining \eqref{eq:bar}, \eqref{eq:gr-iso} and \eqref{eq:tilde} we get $\# \left( \mathrm{cc}(H_{\scS,  G})\cap L[k] \right) = m(\scS; \Z/k\Z)$, as desired. 
\end{proof}

\begin{definition}\quad
\label{def:k-partial}
\begin{enumerate}[(1)]
\item  
For each $k\in \Z$, the \emph{$k$-partial} intersection poset of $\A(G)$ is defined by
 \begin{equation*}
 \label{eq:par-subposet}
L^{\part}[k]:= \{\scC \in L^{\part} \mid \textbf{1} \in E_k(\scC)\}.
 \end{equation*}
\item 
The \emph{$k$-partial} characteristic polynomial of $\A( G)$ is defined by 
$$\chi_{\A( G)}^{\textrm{\textit{k}-par}}(t):= \sum_{\scC \in L^{\part}[k]} \mu(T^\scC, \scC)t^{\mathrm{dim}(\scC)}.$$ 
\end{enumerate}
\end{definition}

  \begin{theorem}
\label{thm:2nd-interpret-quasi}
If $q \in \Z_{>0}$, then 
$$\chi_{\A( G)}^{\textnormal{\textit{q}-par}}(q)=\chi^{\quasi}_{\A}(q).$$ 
\end{theorem}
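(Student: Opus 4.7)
The plan is to follow the same template as the proof of Theorem \ref{thm:Lie}, replacing Lemma \ref{lem:components} at the last step by the arithmetical refinement Lemma \ref{lem:k-components}. Since $g = \dim(G) = 1$ here, equation \eqref{eq:intersections} gives $\dim(\scC) = r_\Gamma - r_\scS$ for every $\scS \in \scR(\scC)$, so all the exponents will line up cleanly.

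First, I would group the defining sum of $\chi_{\A(G)}^{q\text{-par}}(q)$ by the rank layer and apply Lemma \ref{lem:key-Lie} to each $\mu(T^\scC,\scC)$. Because Lemma \ref{lem:key-Lie} returns $0$ precisely on $L\smallsetminus L^{\part}$, extending the outer sum from $L^{\part}[q]$ to all of $L[q]$ does not change its value. This yields
\begin{equation*}
\chi_{\A(G)}^{q\text{-par}}(q) \;=\; \sum_{\scC \in L[q]} \Bigl(\sum_{\scS\in\scR(\scC)}(-1)^{\#\scS}\Bigr) q^{\dim(\scC)}.
\end{equation*}
Next I would swap the order of summation, regrouping by $\scS\subseteq \A$; the inner sum over $\scC$ then counts those components of $H_{\scS,G}$ that lie in $L[q]$, with a common weight $q^{r_\Gamma-r_\scS}$. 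Applying Lemma \ref{lem:k-components} to replace that count by $m(\scS;\Z/q\Z)$ gives
\begin{equation*}
\chi_{\A(G)}^{q\text{-par}}(q) \;=\; \sum_{\scS\subseteq \A}(-1)^{\#\scS}\, m(\scS;\Z/q\Z)\, q^{r_\Gamma-r_\scS},
\end{equation*}
which is precisely the expansion of $\chi_\A^{\Z/q\Z}(q)$ derived from Definition \ref{def:main}. Since $\chi^{\quasi}_\A(q) = \chi_\A^{\Z/q\Z}(q)$ as recalled in Section \ref{subsec:ChQP}, the theorem follows.

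Each individual step is essentially a bookkeeping exercise, so the conceptual work has already been done: Lemma \ref{lem:key-Lie} is what licenses the M\"obius-to-alternating-sum rewriting, and Lemma \ref{lem:k-components} is what supplies the arithmetic multiplicity $m(\scS;\Z/q\Z)$ needed to match the $\Z/q\Z$-characteristic polynomial. Thus the main (and only) nontrivial obstacle is Lemma \ref{lem:k-components} itself, which was already established via the $\ker(\overline{E_k})$ computation; once it is available, the proof of Theorem \ref{thm:2nd-interpret-quasi} is a direct two-step manipulation.
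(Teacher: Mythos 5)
Your proof is correct and is exactly the argument the paper intends: the paper's own proof is a one-line pointer saying to repeat the proof of Theorem~\ref{thm:Lie} with Lemma~\ref{lem:k-components} replacing Lemma~\ref{lem:components}, which is precisely the template you fill in (including the valid observation that $L[q]\smallsetminus L^{\part}[q]\subseteq L\smallsetminus L^{\part}$, so Lemma~\ref{lem:key-Lie} lets you extend the sum before swapping the order of summation).
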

 \begin{proof}
Very similar to Proof of Theorem \ref{thm:Lie} including the use of Lemma \ref{lem:k-components}.
\end{proof}
 \begin{corollary} 
\label{thm:2nd-interpret-constituent}
If $1 \le k \le\rho_{\A}$, then 
$$\chi_{\A( G)}^{\textnormal{\textit{k}-par}}(t)=f^k_{\A}(t).$$ 
  \end{corollary}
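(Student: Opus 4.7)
The plan is to deduce this corollary from Theorem \ref{thm:2nd-interpret-quasi} by a polynomial identity argument. The key observation is that the $k$-partial intersection poset $L^{\part}[k]$ depends on $k$ only modulo $\rho_\A$; once this is in hand, $\chi_{\A(G)}^{q\text{-par}}(t)$ and $\chi_{\A(G)}^{k\text{-par}}(t)$ coincide as polynomials in $t$ whenever $q \equiv k \pmod{\rho_\A}$, and then evaluation at infinitely many positive integers finishes the argument.

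First I would verify the periodicity claim: for positive integers $q$ and $k$ with $q \equiv k \pmod{\rho_\A}$, one has $L[q] = L[k]$, and hence $L^{\part}[q] = L^{\part}[k]$. Fix $\scS \subseteq \A$. The proof of Lemma \ref{lem:k-components} shows that a layer $\scC \in \mathrm{cc}(H_{\scS,G})$ lies in $L[k]$ exactly when its class in $H_{\scS,G}/\scC_\scS^{\textbf{1}} \simeq \Hom((\Gamma/\langle\scS\rangle)_{\tor}, G)$ factors through $G[k] \simeq \Z/k\Z$. The exponent of $(\Gamma/\langle\scS\rangle)_{\tor}$ divides $d_{\scS, n_\scS}$, which in turn divides $\rho_\A$, so for every $x$ in this torsion group and every homomorphism $\varphi$ the order of $\varphi(x)$ is a divisor of $\rho_\A$; consequently $\varphi(x)^q = 1$ if and only if $\varphi(x)^k = 1$. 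This yields the set equality
\[
\Hom((\Gamma/\langle\scS\rangle)_{\tor}, G[q]) = \Hom((\Gamma/\langle\scS\rangle)_{\tor}, G[k])
\]
inside $\Hom((\Gamma/\langle\scS\rangle)_{\tor}, G)$, and hence $L[q] = L[k]$ after running over all $\scS \subseteq \A$.

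With periodicity established, for every positive integer $q$ with $q \equiv k \pmod{\rho_\A}$, Theorem \ref{thm:2nd-interpret-quasi} and the definition of the $k$-constituent of $\chi^{\quasi}_{\A}$ give
\[
\chi_{\A(G)}^{k\text{-par}}(q) = \chi_{\A(G)}^{q\text{-par}}(q) = \chi^{\quasi}_{\A}(q) = f^k_{\A}(q).
\]
Since $\chi_{\A(G)}^{k\text{-par}}(t)$ and $f^k_{\A}(t)$ are both polynomials in $t$ that agree at the infinitely many integers $q = k + n\rho_\A$, they must coincide as polynomials.

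The main obstacle is the periodicity step itself: a priori $L[k]$ could depend on $k$ through more data than just $k \bmod \rho_\A$, so its actual stability (not merely the numerical stability already guaranteed by Lemma \ref{lem:k-components}) has to be read off carefully from the identification of $L[k] \cap \mathrm{cc}(H_{\scS,G})$ with $\ker(\widetilde{E_k}) = \Hom((\Gamma/\langle\scS\rangle)_{\tor}, G[k])$ extracted from the proof of that lemma. Once this identification is unpacked, the divisibility $d_{\scS, n_\scS} \mid \rho_\A$ makes the congruence $q \equiv k \pmod{\rho_\A}$ suffice for set-theoretic equality on the nose, and the rest of the proof is formal.
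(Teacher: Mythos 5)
Your proof is correct, and it supplies exactly the reasoning the paper leaves implicit: the paper states no proof for this corollary, treating it as an immediate consequence of Theorem \ref{thm:2nd-interpret-quasi}, and the periodicity of the poset $L[k]$ in $k \bmod \rho_\A$ is the missing ingredient needed to pass from the pointwise evaluation $\chi_{\A(G)}^{q\text{-par}}(q)=\chi^{\quasi}_{\A}(q)$ to the polynomial identity. Your verification of that periodicity — reading off from the proof of Lemma \ref{lem:k-components} that $\scC\in\mathrm{cc}(H_{\scS,G})$ lies in $L[k]$ iff the corresponding $\varphi\in\Hom((\Gamma/\langle\scS\rangle)_{\tor},G)$ has image in $G[k]$, and then using $d_{\scS,n_\scS}\mid\rho_\A$ to see that this image always lies in $G[\rho_\A]$, so membership depends only on $k\bmod\rho_\A$ — is sound, and the final step (two polynomials agreeing at infinitely many integers coincide) is standard. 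One could alternatively observe that the argument proving Theorem \ref{thm:2nd-interpret-quasi} actually yields the stronger polynomial identity $\chi_{\A(G)}^{q\text{-par}}(t)=\chi_{\A}^{\Z/q\Z}(t)$, but your route from the stated theorem is equally valid and closer to what a reader has in hand.
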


 \begin{corollary} 
\label{thm:2nd-interpret-total}
If $q \in \Z_{>0}$ and $1 \le k \le\rho_{\A}$, then 
\begin{align*}
 \chi_{\A( G)}^{\textnormal{\textit{q}-tot}}(q) = & \chi^{\quasi}_{\A\smallsetminus \A^{\tor}}(q), \\
\chi_{\A( G)}^{\textnormal{\textit{k}-tot}}(t) = & f^k_{\A\smallsetminus \A^{\tor}}(t). 
\end{align*}
  \end{corollary}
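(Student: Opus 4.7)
The plan is to reduce both identities to Theorem~\ref{thm:2nd-interpret-quasi} and Corollary~\ref{thm:2nd-interpret-constituent} applied to the sublist $\A' := \A \smallsetminus \A^{\tor}$, using Remark~\ref{rem:Lie-par} as the bridge between the ``total'' and ``partial'' flavors of the characteristic polynomials.

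First, I would record the key poset-level observation. Remark~\ref{rem:Lie-par} gives $L_{\A(G)} = L_{\A'(G)} = L^{\part}_{\A'(G)}$ as posets, with matching M\"obius functions since these are determined by the poset structure alone. Crucially, the ambient group $T = \Hom(\Gamma, G)$ and the endomorphisms $E_k \colon T \to T$ depend only on $\Gamma$ and $G$, not on the choice of $\A$ versus $\A'$. Hence the subposets cut out by the condition $\textbf{1} \in E_k(\scC)$ coincide:
$$L_{\A(G)}[k] = L_{\A'(G)}[k] = L^{\part}_{\A'(G)}[k].$$
Summing $\mu(T^\scC, \scC)\, t^{\dim(\scC)}$ over these identical subsets therefore gives
$$\chi_{\A(G)}^{\textnormal{\textit{k}-tot}}(t) = \chi_{\A'(G)}^{\textnormal{\textit{k}-par}}(t),$$
and specializing to $k = q$, $t = q$ yields the analogous identity at the integer $q$.

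I would then finish by invoking Corollary~\ref{thm:2nd-interpret-constituent} and Theorem~\ref{thm:2nd-interpret-quasi} with $\A'$ in place of $\A$, which rewrite the right-hand sides as $f^k_{\A'}(t)$ and $\chi^{\quasi}_{\A'}(q)$, respectively. I do not foresee a genuine obstacle; the argument is essentially bookkeeping. The one point worth a sentence of care is that $f^k_{\A'}(t)$ should be well-defined over the stated range $1 \le k \le \rho_\A$ even though the minimum period $\rho_{\A'}$ may be strictly smaller. This is unproblematic: since $\A' \subseteq \A$, every sublist of $\A'$ is a sublist of $\A$, so $\rho_{\A'} \mid \rho_\A$ and $\rho_\A$ is itself a period of $\chi^{\quasi}_{\A'}$.
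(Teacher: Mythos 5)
Your proposal is correct and is exactly the argument the paper leaves implicit (it states this corollary without proof): identify $L_{\A(G)}$ with $L^{\part}_{(\A\smallsetminus\A^{\tor})(G)}$ via Remark~\ref{rem:Lie-par}, observe that the cut-out condition $\textbf{1}\in E_k(\scC)$ is intrinsic to $T$ and so commutes with this identification, and then apply Theorem~\ref{thm:2nd-interpret-quasi} and Corollary~\ref{thm:2nd-interpret-constituent} to $\A\smallsetminus\A^{\tor}$. Your remark that $\rho_{\A\smallsetminus\A^{\tor}}\mid\rho_{\A}$, so that $f^k_{\A\smallsetminus\A^{\tor}}$ is meaningful for the stated range of $k$, is a worthwhile point of care that the paper does not spell out.
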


By Theorem \ref{thm:2nd-interpret-quasi}, we can write 
 \begin{equation}
 \label{eq:quasi-minus}
\chi^{\quasi}_{\A}(q)=\sum_{j=r_\Gamma-r_{\A}}^{r_\Gamma}(-1)^{r_\Gamma-j}\beta_j(q)q^j,
 \end{equation}
with each coefficient $\beta_j(q)$ is a periodic function given by
$$\beta_j(q) = (-1)^{r_\Gamma-j} \sum_{\substack{\scC \in L^{\part}(q) \\  j=\mathrm{dim}(\scC)}}  \mu(T^\scC, \scC) \ge 0.$$ 
It is easily seen that if $a, b\in\Z_{>0}$ and $a \mid b$, then $L^{\part}(a) \subseteq L^{\part}(b)$. 
This obvious inclusion between the subposets implies the result in \cite[Theorem 1.2]{CW12} about the inequality of the constituent coefficients. 
 \begin{corollary}[\cite{CW12}] 
\label{thm:compare-coe}
 If $a, b$ are positive integers and $a$ divides $b$, then for all $j$ with $r_\Gamma-r_{\A} \le j \le r_\Gamma$,
$$0 \le \beta_j(a)  \le \beta_j(b).$$
  \end{corollary}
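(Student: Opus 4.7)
The plan is to deduce both inequalities directly from the combinatorial expression for $\beta_j(q)$ that Theorem \ref{thm:2nd-interpret-quasi} makes available, together with two auxiliary ingredients: the sign-alternation of the Möbius function on $L$ and the divisibility-monotonicity of the index posets $L^{\part}[q]$. Combining Theorem \ref{thm:2nd-interpret-quasi} with \eqref{eq:quasi-minus} and grouping by dimension identifies
$$\beta_j(q) \;=\; (-1)^{r_\Gamma - j} \sum_{\substack{\scC \in L^{\part}[q] \\ \dim(\scC) = j}} \mu(T^\scC, \scC),$$
so it suffices to show that each summand is non-negative and that passing from $q = a$ to $q = b$ (with $a \mid b$) only enlarges the index set.

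First I would establish that every summand is in fact strictly positive. Since $G \in \{\mathbb{S}^1, \C^\times\}$ we are in the case $g = 1$, so for every layer $\scC \in L$ the rank and dimension are related by $\mathrm{rk}_L(\scC) = r_\Gamma - \dim(\scC)$. Corollary \ref{cor:alternate-sign} then gives $(-1)^{r_\Gamma - \dim(\scC)} \mu(T^\scC, \scC) > 0$, and in particular $\beta_j(q) \ge 0$.

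Next I would verify the inclusion $L^{\part}[a] \subseteq L^{\part}[b]$ when $a \mid b$. If $\scC \in L^{\part}[a]$, then by Definition \ref{def:k-partial} there exists $\varphi \in \scC$ with $\varphi^a = \textbf{1}$; writing $b = (b/a)\cdot a$ gives $\varphi^b = (\varphi^a)^{b/a} = \textbf{1}$, hence $\textbf{1} \in E_b(\scC)$ and $\scC \in L^{\part}[b]$. Consequently the index set defining $\beta_j(a)$ is contained in that defining $\beta_j(b)$, and since every contributing term is strictly positive, monotonicity of finite sums gives $\beta_j(a) \le \beta_j(b)$.

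I do not foresee a real obstacle: the substantive content has already been absorbed into Theorem \ref{thm:2nd-interpret-quasi} and Corollary \ref{cor:alternate-sign}. The only thing to check by hand is the behavior of the identity $\textbf{1}$ under the power endomorphism $E_k$, which is a one-line computation in the total group $T$.
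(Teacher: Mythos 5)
Your proof is correct and follows essentially the same route as the paper: you express $\beta_j(q)$ via Theorem \ref{thm:2nd-interpret-quasi} as a sum of Möbius values over the $j$-dimensional layers of $L^{\part}[q]$, use Corollary \ref{cor:alternate-sign} to make each summand nonnegative, and observe the inclusion $L^{\part}[a]\subseteq L^{\part}[b]$ for $a\mid b$. The paper states these ingredients more tersely (it simply declares the inclusion ``easily seen'' and cites the sign-alternation), whereas you spell out the one-line power computation $\varphi^b=(\varphi^a)^{b/a}=\textbf{1}$ and the $g=1$ dimension-to-rank translation; this is just a more explicit rendering of the identical argument.
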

  
   \begin{proof}
Note that $\mu$ strictly alternates in sign (Corollary \ref{cor:alternate-sign}).
\end{proof}

\begin{example}
\label{ex:caculation-toric}
Let $\Gamma=\Z^2$, $\A=\{\alpha, \beta, \gamma\}\subsetneq\Z^2$ with $\alpha=(-1, {1})$, $\beta=(0, {2})$, and $\gamma=(0, 4)$ as in Example \ref{ex:caculation-special}.
The Hasse diagrams of $L^{\part}[k]$ are drawn in Figures \ref{Fig7a}, \ref{Fig7b}, \ref{Fig7c}. 
The $k$-partial characteristic polynomials $\chi_{\A( G)}^{\textnormal{\textit{k}-par}}(t)$ are computed according to the subposets extracted from $L$. 
 \begin{figure}[h] 
\begin{subfigure}{.8\textwidth}
 \[
 \xymatrixrowsep{1cm}
\xymatrixcolsep{.1cm}
 \xymatrix{
\{(1,1)\}& & & \\
\{z_2=1\} \ar@{-}[u]  \ar@{-}[drr] &  &  &   &  \{z_1^{-1}z_2=1\}    \ar@{-}[ullll]  \ar@{-}[dll] \\
& & G^2& &
}
\]\end{subfigure}%
\caption{$\chi_{\A( G)}^{\textnormal{1-par}}(t)=t^2-2t+1=f^1_{\A}(t).$}
\label{Fig7a}
\end{figure}
 \begin{figure}[h]     
\begin{subfigure}{.6\textwidth}
 \[
 \xymatrixrowsep{1cm}
\xymatrixcolsep{.1cm}
 \xymatrix{
\{(1,1)\}& \{(-1,-1)\} & & \\
\{z_2=1\} \ar@{-}[u]  \ar@{-}[drr] & \{z_2=1\} \ar@{-}[u]  \ar@{-}[dr] &  &   &  \{z_1^{-1}z_2=1\}    \ar@{-}[ullll] \ar@{-}[ulll]   \ar@{-}[dll] \\
& & G^2& &
}
\]\end{subfigure}%
\caption{$\chi_{\A( G)}^{\textnormal{2-par}}(t)=t^2-3t+2=f^2_{\A}(t).$}
\label{Fig7b}
\end{figure}
 \begin{figure}[h] 
\begin{subfigure}{.8\textwidth}
 \[
 \xymatrixrowsep{1cm}
\xymatrixcolsep{.1cm}
 \xymatrix{
\{(1,1)\}& \{(-1,-1)\} & \{(i,i)\}& \{(-i,-i)\}\\
\{z_2=1\} \ar@{-}[u]  \ar@{-}[drr] & \{z_2=1\} \ar@{-}[u]  \ar@{-}[dr] & \{z_2=i\} \ar@{-}[u]  \ar@{-}[d] & \{z_2=-i\} \ar@{-}[u]  \ar@{-}[dl]  &  \{z_1^{-1}z_2=1\}    \ar@{-}[ullll] \ \ar@{-}[ulll] \ \ar@{-}[ull] \ar@{-}[ul]  \ar@{-}[dll] \\
& & G^2& &
}
\]\end{subfigure}%
\caption{$\chi_{\A( G)}^{\textnormal{4-par}}(t)=t^2-5t+4=f^4_{\A}(t).$}
\label{Fig7c}
\end{figure}
 \end{example}

\noindent
\textbf{Acknowledgements:} 
 TNT gratefully acknowledges the support of the scholarship program of 
the Japanese Ministry of Education, Culture, Sports, Science, and Technology 
(MEXT) under grant number 142506. 
MY is partially supported by JSPS KAKENHI Grant Numbers JP16K13741, JP15KK0144, JP18H01115. 
\bibliographystyle{alpha} 
\bibliography{references}

\begin{thebibliography}{DFM17}

\bibitem[Ath96]{A96}
C.~A. Athanasiadis.
\newblock Characteristic polynomials of subspace arrangements and finite
  fields.
\newblock {\em Adv. Math.}, 122:193--233, 1996.

\bibitem[Ath10]{A10}
C.~A. Athanasiadis.
\newblock A combinatorial reciprocity theorem for hyperplane arrangements.
\newblock {\em Canad. Math. Bull. Vol.}, 53(1):3--10, 2010.

\bibitem[Bj{\"o}94]{Bj94}
A.~Bj{\"o}rner.
\newblock Subspace arrangements.
\newblock {\em First European Congress of Mathematics, Vol. I (Paris, 1992)},
  119:321--370, 1994.

\bibitem[BM14]{BM14}
P.~Br{\"a}nd{\'e}n and L.~Moci.
\newblock The multivariate arithmetic {T}utte polynomial.
\newblock {\em Trans. Amer. Math. Soc.}, 366(10):5523--5540, 2014.

\bibitem[BS98]{BS98}
A.~Blass and B.~Sagan.
\newblock Characteristic and {E}hrhart polynomials.
\newblock {\em J. Algebr. Comb.}, 7:115--126, 1998.

\bibitem[BS18]{BS18}
M.~Beck and R.~Sanyal.
\newblock {\em Combinatorial reciprocity theorems: {A}n invitation to
  enumerative geometric combinatorics}.
\newblock Graduate Studies in Mathematics, American Mathematical Society, to
  appear, 2018.

\bibitem[BZ06]{BZ06}
M.~Beck and T.~Zaslavsky.
\newblock Inside-out polytopes.
\newblock {\em Adv. Math.}, 205:134--162, 2006.

\bibitem[CW12]{CW12}
B.~Chen and S.~Wang.
\newblock Comparison on the coefficients of characteristic quasi-polynomials of
  integral arrangements.
\newblock {\em J. of Combinatorial Theory, Series A}, 119:271--281, 2012.

\bibitem[DCP05]{DP05}
C.~De~Concini and C.~Procesi.
\newblock On the geometry of toric arrangements.
\newblock {\em Transform. Groups}, 10(3-4):387--422, 2005.

\bibitem[DFM17]{DFM17}
C.~Dupont, A.~Fink, and L.~Moci.
\newblock Universal {T}utte characters via combinatorial coalgebras.
\newblock {\em arXiv preprint}, 2017.
\newblock \url{https://arxiv.org/abs/1711.09028v1}.

\bibitem[DM13]{DM13}
M.~D'Adderio and L.~Moci.
\newblock Arithmetic matroids, the {T}utte polynomial and toric arrangements.
\newblock {\em Adv. in Math.}, 232:335--367, 2013.

\bibitem[ERS09]{ERS09}
R.~Ehrenborg, M.~Readdy, and M.~Slone.
\newblock Affine and toric hyperplane arrangements.
\newblock {\em Discrete and Computational Geometry}, 41(4):481--512, 2009.

\bibitem[KTT08]{KTT08}
H.~Kamiya, A.~Takemura, and H.~Terao.
\newblock Periodicity of hyperplane arrangements with integral coefficients
  modulo positive integers.
\newblock {\em J. Alg. Combin.}, 2008.

\bibitem[Law11]{Law11}
J.~Lawrence.
\newblock Enumeration in torus arrangements.
\newblock {\em European J. Combin.}, 32(6):870--881, 2011.

\bibitem[Loo93]{Loo93}
E.~Looijenga.
\newblock Cohomology of $\mathcal{M}_3$ and $\mathcal{M}_3^1$.
\newblock {\em Contemp. Math., Amer. Math. Soc., Providence, RI}, 150:205--228,
  1993.

\bibitem[LTY17]{LTY17}
Y.~Liu, T.~N. Tran, and M.~Yoshinaga.
\newblock {$G$}-{T}utte polynomials and abelian {L}ie group arrangements.
\newblock {\em arXiv preprint}, 2017.
\newblock \url{https://arxiv.org/abs/1707.04551v2}.

\bibitem[Moc12]{L12}
L.~Moci.
\newblock A {T}utte polynomial for toric arrangements.
\newblock {\em Trans. Amer. Math. Soc.}, 364(2):1067--1088, 2012.

\bibitem[OS80]{OS80}
P.~Orlik and L.~Solomon.
\newblock Combinatorics and topology of complements of hyperplane.
\newblock {\em Invent. Math.}, 56(1):167--189, 1980.

\bibitem[OT92]{OT92}
P.~Orlik and H.~Terao.
\newblock {\em Arrangements of hyperplanes}.
\newblock Grundlehren der Mathematischen Wissenschaften 300, Springer-Verlag,
  Berlin, 1992.

\bibitem[Sta86]{St86}
R.P. Stanley.
\newblock {\em Enumerative Combinatorics}.
\newblock vol. 1, Wadsworth and Brooks/Cole, Pacific Grove, CA, (second
  printing, Cambridge University Press, Cambridge, 1996), 1986.

\bibitem[Sta07]{St07}
R.P. Stanley.
\newblock {\em An introduction to hyperplane arrangements}.
\newblock in: E. Miller, V. Reiner, B. Sturmfels (Eds.), Geometric
  Combinatorics, in: IAS/Park City Math. Ser., vol. 13, Amer. Math. Soc.,
  Providence, RI, pp. 389-496, 2007.

\bibitem[Tra18]{Tan18}
T.~N. Tran.
\newblock An equivalent formulation of chromatic quasi-polynomials.
\newblock {\em arXiv preprint}, 2018.
\newblock \url{https://arxiv.org/abs/1803.08649}.

\bibitem[Tut54]{T54}
W.~T. Tutte.
\newblock A contribution to the theory of chromatic polynomials.
\newblock {\em Canadian J. Math.}, 6:80--91, 1954.

\bibitem[Wan15]{W15}
S.~Wang.
\newblock M\"{o}bius conjugation and convolution formulae.
\newblock {\em J. of Combinatorial Theory, Series B}, 115:117--131, November
  2015.

\bibitem[Yos18a]{Y18L}
M.~Yoshinaga.
\newblock Characteristic polynomials of {L}inial arrangements for exceptional
  root systems.
\newblock {\em J. of Combinatorial Theory, Series A}, 157:267--286, 2018.

\bibitem[Yos18b]{Y18W}
M.~Yoshinaga.
\newblock {W}orpitzky partitions for root systems and characteristic
  quasi-polynomials.
\newblock {\em Tohoku Math. J.}, 70(1):39--63, 2018.

\end{thebibliography}

\end{document}